\newtheorem{theorem}{Theorem}[section]
\newtheorem*{proof*}{Proof}
\newtheorem{remark}{Remark}
\begin{document}
\title{A Partial Differential Equation Model with Age-Structure and Nonlinear Recidivism: Conditions for a Backward Bifurcation and a General Numerical Implementation}
\maketitle

\begin{center}
\author{Fabio Sanchez\footnote{Corresponding author: \url{fabio.sanchez@ucr.ac.cr}\\
Present address: Centro de Investigaci\'on en Matem\'atica Pura y Aplicada (CIMPA), Escuela de Matem\'atica, Universidad de Costa Rica. San Pedro de Montes de Oca, San Jos\'e, Costa Rica, 11501.},
Juan G. Calvo\footnote{Centro de Investigaci\'on en Matem\'atica Pura y Aplicada (CIMPA), Escuela de Matem\'atica, Universidad de Costa Rica. San Pedro de Montes de Oca, San Jos\'e, Costa Rica, 11501. Email: \url{juan.calvo@ucr.ac.cr}},
Esteban Segura\footnote{Centro de Investigaci\'on en Matem\'atica Pura y Aplicada (CIMPA), Escuela de Matem\'atica, Universidad de Costa Rica. San Pedro de Montes de Oca, San Jos\'e, Costa Rica, 11501. Email: \url{esteban.seguraugalde@ucr.ac.cr}} and 
Zhilan Feng\footnote{Department of Mathematics, Purdue University, West Lafayette, IN, USA, 47907. Email: \url{zfeng@math.purdue.edu}}}
\end{center}

\begin{abstract}
\noindent We formulate an age-structured three-staged nonlinear partial differential equation model that features {\it nonlinear} recidivism to the infected ({\it infectious}) class from the {\it temporarily} recovered class. Equilibria are computed, as well as local and global stability of the {\it infection-free} equilibrium. As a result, a backward-bifurcation exists under necessary and sufficient conditions. A generalized numerical framework is established and numerical experiments are explored for two positive solutions to exist in the {\it infectious} class.
\end{abstract}


\section{Introduction} \label{sec:intro}
Ordinary differential equation models with nonlinear recidivism have been explored in previous studies~\cite{arino2003global,brauer2004,brauer2011,ccc2004,feng2000model,sanchez2007,song2006,song2013}. These models exhibit a phenomena known as a {\it backward bifurcation}, a behavior that is strongly correlated to initial conditions, and more specifically, to the initial number of infectious individuals. The implications of a backward bifurcation in epidemic models are crucial in understanding how to develop control mechanisms for the disease. 

The main concept in these models is the study of the threshold quantity $\mathcal{R}_0$, {\it the basic reproductive number}, which represents the number of secondary infections caused by a infectious individual when introduced into a mostly susceptible population. Typically, the condition of having $\mathcal{R}_0<1$ is sufficient to have the disease die out in compartmental models. In other words, the infection-free equilibrium is stable. Conversely, the disease prevails and the endemic equilibrium becomes asymptotically stable when $\mathcal{R}_0>1$. However, if there is a backward bifurcation, then $\mathcal{R}_0<1$ is not a sufficient condition for the disease to die out. Furthermore, efforts to control the disease will depend on the density of infectious individuals in the population or having $\mathcal{R}_0 \ll 1$. 

In ordinary differential equation models, when studying the bifurcation behavior, it is represented by the graph of the infected class versus $\mathcal{R}_0$. This is represented as a function of the transmission rate. This behavior translates into the homologous nonlinear partial differential equation model. The bifurcation in our model is described graphically by a surface plot represented by the infected class at a steady state distribution as a function of age and $\mathcal{R}_0$. The main difference is the age dependence which can lead to the development of better control efforts using the individuals age distribution.

Other studies have included age-structure in their models~\cite{allen2008mathematical,holmes1994partial,kribs2002,martcheva2003progression,shim2006,sanchez2018x}. However, in the model presented here, besides including age-structure, we also include the possibility of recidivism, which leads to a backward bifurcation under some conditions. A strictly theoretical approach was studied in \cite{ccc2002}. 

The rest of this paper is organized as follows. In \Cref{sec:model}, we describe the homologous partial differential equation model, based on the nonlinear ordinary differential equation model presented in~\cite{sanchez2007}. In \Cref{sec:freesteadyState}, we study the infection-free non-uniform steady state distribution, its stability, and we define the {\it basic reproductive number}, $\mathcal{R}_0$. We study the endemic non-uniform steady state distribution in \Cref{sec:steadyState}, where we also analyze the existence of two positive solutions if necessary and sufficient conditions hold, which results in a backward bifurcation. In \Cref{sec:exp}, a general numerical framework is presented, including an example based on the parameters used in \cite{sanchez2007}, as well as an example with age-dependent parameters. Finally, in \Cref{sec:conc} we give some conclusions of the model.


\section{A model with nonlinear recidivism} \label{sec:model}
We construct an age-dependent nonlinear partial differential equation model where $S(t,a)$, $I(t,a)$ and $R(t,a)$ represent susceptible, infected and temporarily recovered individuals at time $t$ and age $a$, respectively. Susceptible individuals can become infected by having contact with an infectious individual at rate $\beta(a) B(t,a)$, where $B(t,a)$ is the probability that a person contacted by a susceptible of age $a$ is infectious at time $t$ and $\beta(a)$ is the transmission rate at age $a$; they can also exit the system at rate $\mu(a)$, where $\mu(a)$ is the exit rate of the system at age $a$. Infected individuals receive treatment at rate $\phi(a)$, can recover at rate $\gamma(a)$ or exit the system at rate $\mu(a)$. Analogously, temporarily recovered individuals can become infected ({\it subsequent infections}) by having contact with an infectious individual at rate $\rho(a) B(t,a)$ and can exit the system at rate $\mu(a)$. In this case, the class $R(t,a)$ acts as another \lq\lq susceptible" class.

The age-dependent mixing contact structure is modeled via the mixing density $p(t,a,a')$, which gives the proportion of contact that individuals of age $a$ have with individuals of age $a'$, given that they had contact with somebody at time $t$. We will restrict ourselves to the case of proportional mixing; this is, $$p(t,a,a') \equiv p(t,a) =  \frac{c(a)n(t,a)}{\int_0^\infty c(a) n(t,a)\ da}, $$ with $c(a)$ the age-specific per capita contact/activity rate; see, e.g., \cite{shim2006}. We then define the force of infection $$ B(t) := \int_0^\infty \frac{I(t,a)}{n(t,a)} p(t,a)da,$$ where $n(t,a):= S(t,a)+I(t,a)+R(t,a)$ is the total population.

The model we just described is given by the system of partial differential equations
\begin{align}\label{PDE}
\left( \frac{\partial}{\partial t} + \frac{\partial}{\partial a} \right) S(t,a) &=  - \beta(a) S(t,a) B(t) -\mu(a)S(t,a),\\
\left( \frac{\partial}{\partial t} + \frac{\partial}{\partial a} \right) I(t,a) &= \beta(a) S(t,a) B(t) -(\mu(a) + \phi(a) + \gamma(a)) I(t,a)+\rho(a) R(t,a) B(t),\nonumber\\
\left( \frac{\partial}{\partial t} + \frac{\partial}{\partial a} \right) R(t,a) &=  (\phi(a) + \gamma(a)) I(t,a)- \rho(a) R(t,a) B(t) - \mu(a) R(t,a),\nonumber
\end{align} 
for $t>0$, $a>0$, and boundary conditions for $t=0$ and $a=0$ given by 
\begin{align*}
S(t,0)&=\Lambda, & I(t,0)&=0, & R(t,0)&=0,\nonumber\\
S(0,a)&=S_0(a), & I(0,a)&=I_0(a), & R(0,a)&=R_0(a),
\end{align*}
where $\Lambda$ denotes the birth rate (assumed constant), and $S_0(a), I_0(a), R_0(a)$ are given initial conditions. For simplicity, we will assume that $\beta(a)$, $\mu(a)$, $\phi(a)$, $\gamma(a)$, $\rho(a)$ are continuous functions.

The total population $n(t,a)$ satisfies the initial value problem
\begin{align*}
\left( \frac{\partial}{\partial t} +\frac{\partial}{\partial a} \right) n(t,a) &= -\mu(a)n(t,a),\\
n(t,0) &= \Lambda,\\
n(0,t) &= S_0(a)+I_0(a)+R_0(a) =: n_0(a),
\end{align*}
with $t>0$ and $a>0$, where $\Lambda$ represents the constant rate at which newborns enter the population. When solving by using the method of characteristics we obtain 
\begin{equation*}
    n(t,a) = \left\lbrace
    \begin{array}{rl}
        n_0(a-t) \dfrac{\mathcal{F}(a)} {\mathcal{F}(a-t)} & {\rm if }\ t<a,  \\
        \Lambda \mathcal{F}(a) & {\rm if }\ t>a,
    \end{array}
    \right.
\end{equation*}
where $$\mathcal{F}(a)=e^{\displaystyle -\int_0^a \mu(h)dh}.$$ 

We note that $n(t,a)$ is continuous if and only if the compatibility condition $n_0(0)=\Lambda$ is satisfied. At demographic steady state, $$\lim_{t\rightarrow\infty} n(t,a)=\Lambda \mathcal{F}(a),$$ and therefore $$p_\infty(a) := \lim_{t\rightarrow\infty} p(t,a)=\frac{c(a) \mathcal{F}(a)}{\int_0^\infty c(a) \mathcal{F}(a)\ da}.$$  
We rescale variables by making the substitution
\begin{align*}
&s(t,a) := \dfrac{S(t,a)}{n(t,a)},\quad i(t,a) := \dfrac{I(t,a)}{n(t,a)},\quad r(t,a) := \dfrac{R(t,a)}{n(t,a)}.
\end{align*}
Thus, 
$$B(t) = \int_0^\infty i(t,a) p(t,a)\ da$$ 
and system \eqref{PDE} becomes
\begin{subequations} \label{pde}
\begin{align}
\begin{split} \label{pde_s}
\left( \frac{\partial}{\partial t} + \frac{\partial}{\partial a} \right) s(t,a) &= - \beta(a) s(t,a) B(t),
\end{split}\\
\begin{split} \label{pde_i}
\left( \frac{\partial}{\partial t} + \frac{\partial}{\partial a} \right) i(t,a) &= \beta(a) s(t,a) B(t) - (\phi(a)+\gamma(a)) i(t,a)+\rho(a) r(t,a) B(t),
\end{split}\\
\begin{split} \label{pde_r}
\left( \frac{\partial}{\partial t} + \frac{\partial}{\partial a} \right) r(t,a) &=  (\phi(a)+\gamma(a)) i(t,a)- \rho(a) r(t,a) B(t),
\end{split}
\end{align}
along with boundary conditions
\begin{align} \label{pde_ic}
s(t,0)&=1, & i(t,0)&=0, & r(t,0)&=0,\nonumber\\
s(0,a)&=s_0(a), & i(0,a)&=i_0(a), & r(0,a)&=r_0(a),
\end{align}
\end{subequations}
where $s_0(a)+i_0(a)+r_0(a) = 1$. 

\section{Infection-free non-uniform steady state distribution and basic reproductive number} \label{sec:freesteadyState}
In this section we explore the infection-free steady state distribution and study its local and global stability. We also compute the {\it basic reproductive number} $\mathcal{R}_0$, and the critical value $\mathcal{R}_C$ that will determine global stability to the infection-free steady state.

For the infection-free non-uniform steady state distribution, system \eqref{pde} simplifies to
\begin{subequations} \label{freepde}
\begin{align}
\begin{split} \label{freePDE_s*}
\dfrac{ds^*(a)}{da}  &= 0,
\end{split}\\
\begin{split} \label{freePDE_i*}
\dfrac{di^*(a)}{da} &= 0 ,
\end{split}\\
\begin{split} \label{freePDE_r*}
\dfrac{dr^*(a)}{da} &=  0,
\end{split}
\end{align}
with $a > 0$ and along with boundary conditions
\begin{equation} \label{PDE_ic***}
s^*(0)=1,\quad  i^*(0)=0,\quad r^*(0)=0.
\end{equation}
\end{subequations}

Thus, system \eqref{freepde} supports the infection-free non-uniform 
distribution given by
\begin{equation} \label{solfreepde}
s^*(a) = 1,\quad i^*(a) = 0,\quad r^*(a) =  0.
\end{equation}

The local stability of \eqref{solfreepde} is studied using the perturbations
\begin{align*}
s(t,a) &= \hat{s}(a)e^{\lambda t} + s^*(a) = \hat{s}(a)e^{\lambda t} + 1,\\
i(t,a) &= \hat{i}(a)e^{\lambda t},\nonumber\\
r(t,a) &= \hat{r}(a)e^{\lambda t}, \nonumber\\
B(t)  &= B_0 e^{\lambda t}, \nonumber
\end{align*}
where
\begin{equation} \label{b_o}
B_0 = \int_0^\infty \hat{i}(a)p_\infty (a)da.
\end{equation}
We then have that
\begin{equation*}
\left( \frac{\partial}{\partial t} + \frac{\partial}{\partial a} \right) i(t,a) = \lambda \hat{i}(a)e^{\lambda t} + \dfrac{d \hat{i}(a)}{da} e^{\lambda t},
\end{equation*}
and from \eqref{pde_i}, we obtain 
\begin{align*}
\lambda \hat{i}(a) + \dfrac{d \hat{i}(a)}{da} &= \beta(a) s^*(a) B_0 -(\phi(a)+\gamma(a)) \hat{i}(a) + \rho(a) r^*(a) B_0 \\
&= \beta(a) B_0 - (\phi(a)+\gamma(a)) \hat{i}(a).
\end{align*}
Thus, $\hat{i}(a)$ solves the first-order linear ordinary differential equation
\begin{align*}
\dfrac{d \hat{i}(a)}{da} + ( \lambda +  \phi(a)+\gamma(a)) \hat{i}(a) = \beta(a) B_0\quad \text{with}\quad \hat{i} (0)=0,
\end{align*}
which solution is given by
\begin{align*}
\hat{i} (a) 
&=  B_0 \int_0^a  \beta(h)\ e^{-\lambda (a-h) -\int_h^a (\phi(k) +\gamma(k) )dk}\ dh.
\end{align*} 
Multiplying by $p_\infty(a)$ and integrating, we get from \eqref{b_o} that
$$
B_0 = B_0 \int_0^\infty \int_0^a  p_\infty (a) \beta(h) e^{-\lambda (a-h) - \int_h^a (\phi(k) + \gamma(k))dk}  dh da.
$$
Therefore, for $B_0 \neq 0$,
$$
1 = \int_0^\infty \int_0^a  p_\infty (a) \beta(h) e^{-\lambda (a-h) - \int_h^a (\phi(k) +\gamma(k))dk}  dh da.
$$
Then we have the corresponding Euler-Lotka equation (see, e.g., \cite[Chapter 20]{kot2001elements})
\begin{equation} \label{Eul_Lot}
G(\lambda):= \int_0^\infty \int_0^a  p_\infty (a) \beta(h) e^{-\lambda (a-h) - \int_h^a (\phi(k)+\gamma(k))dk}  dh da =1.
\end{equation}
We then define the control adjusted reproductive number, $\mathcal{R}_0$, by
\begin{equation} 
\mathcal{R}_0 :=G(0) = \int_0^\infty \int_0^a  p_\infty (a) \beta(h) e^{- \int_h^a (\phi(k)+\gamma(k))dk}  dh da.
\end{equation}
We also consider
\begin{equation*} 
\mathcal{R}_C := \int_0^\infty \int_0^a  p_\infty (a) \beta(h)  dh da.
\end{equation*}

We then establish the following theorem:

\begin{theorem}
If $\mathcal{R}_0 < 1$, the infection-free non-uniform steady state distribution~\eqref{solfreepde} is locally asymptotically stable and unstable if $\mathcal{R}_0>1$. \label{thm:global}
\end{theorem}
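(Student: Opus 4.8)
The plan is to reduce the local stability question to the location of the roots of the characteristic equation $G(\lambda)=1$ derived in \eqref{Eul_Lot}, and then to exploit the monotonicity and positivity of $G$ on the real axis. First I would record the structural properties of $G$: for real $\lambda$ the integrand in \eqref{Eul_Lot} is positive, so $G$ is real-valued and, since $a-h\ge 0$ makes $e^{-\lambda(a-h)}$ decreasing in $\lambda$, it is continuous and strictly decreasing on $\mathbb{R}$, with $G(\lambda)\to 0$ as $\lambda\to+\infty$ and $G(\lambda)$ increasing without bound (eventually $+\infty$) as $\lambda\to-\infty$; moreover $G$ is analytic on a suitable right half-plane, and $G(0)=\mathcal{R}_0$. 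I would also note that the eigenvalue problem from the perturbation ansatz collapses exactly to \eqref{Eul_Lot}, because the $\hat s$ and $\hat r$ equations are first-order linear with zero initial data and are solved explicitly in terms of $\hat i$, while $B_0=\int_0^\infty\hat i(a)p_\infty(a)\,da$ produces the condition $G(\lambda)=1$.

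For the unstable case $\mathcal{R}_0>1$: since $G$ is continuous and strictly decreasing on $\mathbb{R}$ with $G(0)=\mathcal{R}_0>1>\lim_{\lambda\to+\infty}G(\lambda)=0$, there is a unique real $\lambda^\ast>0$ with $G(\lambda^\ast)=1$. This $\lambda^\ast$ is an eigenvalue of the linearization with positive real part (with $\hat i$ the explicit solution found above and $\hat s,\hat r$ determined from it), so the infection-free distribution \eqref{solfreepde} is unstable.

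For the stable case $\mathcal{R}_0<1$: I would argue by contradiction. Suppose $\lambda=x+iy$ is a root of $G(\lambda)=1$ with $x=\Re\lambda\ge 0$. Bounding the modulus inside the integral and using $|e^{-\lambda(a-h)}|=e^{-x(a-h)}$ together with the positivity of the remaining factors gives $1=|G(\lambda)|\le G(x)$. Since $G$ is decreasing and $x\ge 0$, $G(x)\le G(0)=\mathcal{R}_0<1$, a contradiction. Hence every root has $\Re\lambda<0$; the same bound $|G(\lambda)|\le G(\Re\lambda)$, applied with the unique real $\lambda_{\mathrm{crit}}<0$ solving $G(\lambda_{\mathrm{crit}})=1$, shows every root satisfies $\Re\lambda\le\lambda_{\mathrm{crit}}<0$, so the spectrum lies in a left half-plane bounded away from the imaginary axis and \eqref{solfreepde} is locally asymptotically stable.

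The main obstacle I anticipate is not this root-location argument but the justification that local asymptotic stability is governed exactly by the roots of $G(\lambda)=1$ — i.e., that the exponential ansatz $e^{\lambda t}$ captures the full spectrum of the linearized age-structured system and that the principle of linearized stability applies. I would handle this by reformulating the linearized system as an abstract Cauchy problem and invoking the standard semigroup/linearized-stability theory for age-structured population models, where for this class of (eventually compact) semigroups the growth bound coincides with the spectral bound; the explicit reduction to \eqref{Eul_Lot} noted above then makes the spectral bound directly computable, and the estimate $\Re\lambda\le\lambda_{\mathrm{crit}}<0$ yields exponential decay of perturbations.
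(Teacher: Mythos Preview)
Your argument is correct and follows essentially the same route as the paper: both reduce the question to locating the roots of the Euler--Lotka equation $G(\lambda)=1$ and use the monotonicity of $G$ on the real axis together with the fact that the unique real root dominates all complex roots. The paper simply cites this dominant-root property from \cite{kot2001elements}, whereas you prove it directly via the modulus bound $|G(\lambda)|\le G(\Re\lambda)$; you also flag the need for the principle of linearized stability (semigroup framework), a point the paper leaves implicit.
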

\begin{proof} 
Observe that $G^\prime (\lambda)<0$ and $G^{\prime \prime}(\lambda) >0$ for all $\lambda$, and that $\displaystyle \lim_{\lambda\rightarrow \infty} G(\lambda) = 0$ and $\displaystyle \lim_{\lambda\rightarrow -\infty} G(\lambda) = \infty$. Then $G(\lambda)$ is a continuous, strictly decreasing, convex function of $\lambda$ that takes on all positive values. Thus \eqref{Eul_Lot} has exactly one real root and it is a dominant root (see \cite[Chapter 20]{kot2001elements}).
Therefore we have that \eqref{Eul_Lot} has a unique negative real solution if and only if $\mathcal{R}_0<1$ and it has a unique positive real solution if and only if $\mathcal{R}_0>1$.
Then, the fact that the unique real root is dominant guarantees the local asymptotic stability of the infection-free non-uniform steady state distribution, i.e., it is locally asymptotically stable if $\mathcal{R}_0 <1$ and unstable if $\mathcal{R}_0 >1$.
\end{proof}

We also have the following result about global stability:
\begin{theorem}\label{th:globalStab}
Assume that $\mathcal{R}_{C}  < 1.$ Then, the infection-free steady state distribution of System \eqref{pde} is globally asymptotically stable.
\end{theorem}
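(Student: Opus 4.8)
The plan is to avoid constructing a Lyapunov functional and instead close a renewal-type integral inequality for the force of infection $B(t)$, exploiting that $\mathcal{R}_C$ is exactly the $L^1$-mass of the convolution kernel that arises. Throughout I would use the a priori bounds coming from nonnegativity and $s+i+r\equiv1$, namely $0\le s,i,r\le1$ and hence $0\le B(t)\le\int_0^\infty p(t,a)\,da=1$. To keep the bookkeeping transparent I would first treat the case in which the population is already at demographic steady state, so $p(t,a)=p_\infty(a)$; the general case is the same argument carrying harmless $1+o(1)$ factors, since $n(t,a)\to\Lambda\mathcal{F}(a)$ and the concluding estimate has strict slack because $\mathcal{R}_C<1$.

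First I would eliminate the recidivism term: adding \eqref{pde_i} and \eqref{pde_r}, the contributions $\pm\rho(a)r(t,a)B(t)$ cancel, so $j:=i+r=1-s$ solves the linear transport equation $(\partial_t+\partial_a)j=\beta(a)\,s(t,a)\,B(t)\le\beta(a)B(t)$ with $j(t,0)=0$. Integrating along characteristics and using $0\le i\le j$, this yields for $t>a$
\[
i(t,a)\le j(t,a)\le\int_0^a\beta(h)\,B(t-a+h)\,dh,
\]
and for $t\le a$, $\;j(t,a)\le j_0(a-t)+\int_0^t\beta(a-t+\tau)B(\tau)\,d\tau$ with $j_0:=i_0+r_0$.

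Next I would substitute these into $B(t)=\int_0^\infty i(t,a)p_\infty(a)\,da$, split the integral at $a=t$, and change variables to the time of infection $\tau=t-a+h$ in the part $a<t$. An application of Fubini recombines the ``young-cohort'' and ``initial-data'' contributions into a single convolution inequality
\[
B(t)\le g(t)+\int_0^t\kappa(t-\tau)\,B(\tau)\,d\tau,\qquad\kappa(u):=\int_0^\infty p_\infty(u+h)\,\beta(h)\,dh,
\]
with $g(t):=\int_0^\infty p_\infty(t+\sigma)\,j_0(\sigma)\,d\sigma$. Two short computations give what is needed: $g(t)\le\int_t^\infty p_\infty(a)\,da\to0$ as $t\to\infty$, and, by Fubini,
\[
\int_0^\infty\kappa(u)\,du=\int_0^\infty\beta(h)\int_h^\infty p_\infty(a)\,da\,dh=\int_0^\infty\int_0^a p_\infty(a)\beta(h)\,dh\,da=\mathcal{R}_C
\]
(this uses mild integrability, e.g.\ $\beta$ bounded and $p_\infty$ with finite mean age, so that $\kappa\in L^1$). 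Finally I would let $\overline B:=\limsup_{t\to\infty}B(t)\in[0,1]$; for $\varepsilon>0$ choose $T$ with $B(\tau)\le\overline B+\varepsilon$ and $g(t)\le\varepsilon$ for $t\ge T$, split $\int_0^t=\int_0^T+\int_T^t$, and bound the first piece by $\|B\|_\infty\int_{t-T}^\infty\kappa(u)\,du\to0$ and the second by $(\overline B+\varepsilon)\int_0^\infty\kappa(u)\,du=(\overline B+\varepsilon)\mathcal{R}_C$. Letting $t\to\infty$ and then $\varepsilon\to0$ forces $\overline B\le\mathcal{R}_C\overline B$, so $\overline B=0$; thus $B(t)\to0$, and feeding this back into the characteristic bounds gives $i(t,a),r(t,a)\to0$ and $s(t,a)\to1$ (uniformly on bounded age intervals), i.e.\ convergence to \eqref{solfreepde} for every admissible initial datum.

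I expect the main obstacle to be the bookkeeping in the third step: reorganizing the double age integral so that the nonlocal term for $a>t$ and the initial-data term assemble into a genuine convolution with $L^1$-mass \emph{exactly} $\mathcal{R}_C$ (not a larger constant) — the cancellation that makes $j=i+r$ transport-like is precisely what makes this possible. The only other point requiring care is justifying the reduction to demographic equilibrium, i.e.\ controlling the ratio $p(t,a)/p_\infty(a)=1+o(1)$; this is benign exactly because the concluding contraction estimate retains the strict gap $1-\mathcal{R}_C>0$.
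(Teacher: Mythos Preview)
Your proof is correct and shares the paper's two essential moves: adding $i+r$ to cancel the recidivism term, then integrating along characteristics and closing with a $\limsup$ argument hinged on $\mathcal{R}_C$. The execution differs, however. You linearize at once via $s\le 1$ and recast everything as a scalar renewal inequality $B(t)\le g(t)+(\kappa*B)(t)$ whose kernel has $L^1$-mass exactly $\mathcal{R}_C$; the conclusion is then the standard $\limsup$ estimate for sub-convolutions. The paper instead keeps the explicit characteristic formula for $s$, works pointwise in age with $I(a)=\limsup_{t} i(t,a)$ and $C=\int I(a)p_\infty(a)\,da$, and obtains the sharper nonlinear bound $I(a)\le 1-\exp\bigl(-C\int_0^a\beta\bigr)$; the contradiction then comes from showing that $F(C):=C^{-1}\int p_\infty(a)\bigl(1-e^{-C\int_0^a\beta}\bigr)\,da$ is strictly decreasing with $F(0^+)=\mathcal{R}_C<1$. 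Your route is slightly more elementary (no monotonicity lemma for $F$ is needed) and makes the role of $\mathcal{R}_C$ as a kernel mass transparent; the paper's route retains more structure but does not, in the end, exploit the extra sharpness. Your remark about reducing to demographic steady state is consistent with what the paper does implicitly, since it likewise restricts attention to $t>a$ and uses $p_\infty$ in the closing estimate.
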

\begin{proof}

Define $q(t,a):= i(t,a) + r(t,a)$. From \eqref{pde}, $q$ satisfies the equation 
$$
\left( \frac{\partial}{\partial t} + \frac{\partial}{\partial a} \right) q(t,a) = \beta(a) s(t,a) B(t).
$$
We use then the method of characteristics to obtain that for $t>a$
\begin{align} \label{eq_ineq}
s(t,a) &= e^{\displaystyle -\int_0^a \beta(h) B(h+t-a)\ dh},\nonumber \\
q(t,a) &= \int_0^a \beta(h) B(h+t-a) s(h+t-a,h)\ dh.
\end{align}

Define $Q(a) := \displaystyle\limsup_{t\rightarrow \infty}\ q(t,a) $, $I(a) := \displaystyle\limsup_{t\rightarrow \infty}\ I(t,a) $ and $C := \int_0^\infty p_\infty(a) I(a)\ da$. Thus, from \eqref{eq_ineq} we deduce that
$$I(a)\leq Q(a)\leq C\int_0^a \beta(h) e^{-C\int_0^h \beta(\tau)\ d\tau}\ dh = 1-e^{-C\int_0^a \beta(\tau)\ d\tau}.$$
Multiplying by $p_\infty(a)$ and integrating with respect to $a$, we then have that
$$C \leq \int_0^a \left(1-e^{-C\int_0^a \beta(\tau)\ d\tau }\right) p_{\infty}(a)\ da.$$
Suppose that $C> 0$. We can write then
$$1 \leq \dfrac{1}{C} \int_0^a \left(1-e^{-C\int_0^a \beta(\tau)\ d\tau }\right) p_{\infty}(a)\ da =: F(C).$$

It is straightforward to verify that $$ \displaystyle \lim_{C\rightarrow 0} F(C) =  \mathcal{R}_C,$$ 
and that 
$$F'(C) = \dfrac{1}{C^2} \displaystyle \int_0^\infty p_\infty(a) e^{-C\int_0^a \beta(\tau)\ d\tau } \left(1+ C \int_0^a \beta(\tau)\ d\tau -e^{ C\int_0^a \beta(\tau)\ d\tau } \right)\ da < 0$$
since $e^x > 1+x$ for $x\neq 0$. Therefore, $$F(C)\leq \lim_{C\rightarrow 0} F(C) = \mathcal{R}_C < 1,$$
which is a contradiction. We then conclude that $C=0$. Since $i(t,a)\geq 0$ and $r(t,a) \geq 0$, we conclude that $\displaystyle\lim_{t\rightarrow \infty} i(t,a) = \displaystyle\lim_{t\rightarrow \infty} r(t,a) = 0$ and $\displaystyle\lim_{t\rightarrow \infty} s(t,a) = 1$.
\end{proof}
 
\section{Endemic non-uniform steady state distribution} \label{sec:steadyState}

In this section we analyze necessary conditions for the existence of non-uniform steady state distributions. We first prove that this is the case if $\mathcal{R}_0 > 1$. We then compute explicit conditions in order to observe a backward bifurcation, for the particular case of constant coefficients $\mu, \beta,\phi, \gamma, \rho, c$.

\subsection{Existence of at least one endemic non-uniform steady state}
In this section we establish conditions for which at least one positive solution exists. For our model, the existence of solutions mostly rely under the threshold condition $\mathcal{R}_0$.

\begin{theorem} \label{th*}
There exists an endemic non-uniform steady state of system \eqref{pde} when $\mathcal{R}_0 > 1$.
\end{theorem}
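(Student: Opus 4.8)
The plan is to recast the existence of an endemic steady state as a scalar fixed-point problem for the force of infection. Setting $\partial_t \equiv 0$ in \eqref{pde}, I would treat $B \ge 0$ as a free parameter and solve the resulting system of linear ordinary differential equations in the age variable subject to $s^\ast(0)=1$, $i^\ast(0)=0$, $r^\ast(0)=0$. Equation \eqref{pde_s} gives at once $s^\ast(a)=\exp\!\big(-B\int_0^a\beta(h)\,dh\big)$; adding the three steady-state equations shows $s^\ast+i^\ast+r^\ast\equiv 1$, so $q^\ast:=i^\ast+r^\ast=1-s^\ast$ is known explicitly. Substituting $r^\ast=q^\ast-i^\ast$ into \eqref{pde_i} converts it into a single first-order linear ODE for $i^\ast$ with a known right-hand side, whose solution $i^\ast(\cdot\,;B)$ is given by variation of constants; a further variation-of-constants formula for $r^\ast$ shows $i^\ast,r^\ast\ge 0$, hence $0\le i^\ast(a;B)\le q^\ast(a)\le 1$. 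One then sets
\[
\Phi(B):=\int_0^\infty i^\ast(a;B)\,p_\infty(a)\,da,
\]
and an endemic steady state corresponds exactly to a root $B^\ast>0$ of $\Phi(B)=B$.

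Next I would analyze $h(B):=\Phi(B)/B$ on $(0,\infty)$. Continuity follows from continuous dependence of the ODE solution on the parameter $B$ together with the uniform bound $0\le i^\ast(a;B)\le 1$ and integrability of $p_\infty$, via dominated convergence. For the limit as $B\to 0^+$, the estimate $i^\ast(a;B)\le q^\ast(a)=1-s^\ast(a)\le B\int_0^a\beta(h)\,dh$ yields the $B$-independent bound $i^\ast(a;B)/B\le\int_0^a\beta(h)\,dh$, which is $p_\infty$-integrable under the standing hypotheses (the same ones that make $\mathcal{R}_0$ finite); since pointwise $i^\ast(a;B)/B\to\int_0^a\beta(h)e^{-\int_h^a(\phi(k)+\gamma(k))dk}\,dh$, dominated convergence gives $\lim_{B\to 0^+}h(B)=\mathcal{R}_0$. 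At the other end, $i^\ast\le 1$ forces $\Phi(B)\le\int_0^\infty p_\infty(a)\,da=1$, so $h(B)\le 1/B\to 0$ as $B\to\infty$.

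Finally, since $\mathcal{R}_0>1$ by hypothesis and $h$ is continuous on $(0,\infty)$ with $\lim_{B\to0^+}h(B)=\mathcal{R}_0>1$ and $\lim_{B\to\infty}h(B)=0<1$, the intermediate value theorem produces $B^\ast\in(0,\infty)$ with $h(B^\ast)=1$, i.e. $\Phi(B^\ast)=B^\ast$. The triple $(s^\ast,i^\ast,r^\ast)$ evaluated at $B=B^\ast$ then solves the steady-state system \eqref{freepde}-type equations with nonzero force of infection, is componentwise nonnegative, sums to $1$, and satisfies $i^\ast(a;B^\ast)>0$ for $a>0$ because $B^\ast>0$; hence it is a genuine endemic non-uniform steady state.

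I expect the main obstacle to be the small-$B$ asymptotics $\lim_{B\to0^+}\Phi(B)/B=\mathcal{R}_0$: one must exhibit a $B$-independent integrable majorant for $i^\ast(a;B)/B$ and verify the pointwise limit, and it is precisely here that the continuity and integrability assumptions on $\beta,\phi,\gamma,\rho$ and on $p_\infty$ (equivalently, finiteness of $\mathcal{R}_0$ and of the relevant moment of $\mathcal{F}$) are genuinely needed. Checking the invariance $s^\ast+i^\ast+r^\ast\equiv 1$ and componentwise nonnegativity along the way is routine but should not be omitted, as these facts underlie the bound $\Phi(B)\le 1$ used in the $B\to\infty$ regime.
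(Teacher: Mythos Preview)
Your proposal is correct and follows the same overall strategy as the paper: parametrize the steady-state system by the force of infection $B$, define $\Phi(B)=\int_0^\infty i^\ast(a;B)\,p_\infty(a)\,da$, study the auxiliary function $\Phi(B)/B$, identify its limit at $B\to 0^+$ with $\mathcal{R}_0$, and conclude by the intermediate value theorem. The differences are in execution only. First, you exploit the conservation law $s^\ast+i^\ast+r^\ast\equiv 1$ to collapse the $(i^\ast,r^\ast)$ subsystem to a single scalar linear ODE for $i^\ast$, whereas the paper builds a fundamental matrix for the $2\times 2$ system (finding one homogeneous solution by inspection, a second via Abel's formula) and then applies variation of parameters; your route is shorter and yields the same pointwise limit $i^\ast(a;B)/B\to\int_0^a\beta(h)e^{-\int_h^a(\phi+\gamma)}\,dh$. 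Second, for the ``large'' end of the IVT the paper simply evaluates at $B=1$ (using $H(1)<1$), while you send $B\to\infty$ and use $\Phi(B)\le 1$; either works. One small caveat: the $p_\infty$-integrable majorant you invoke for $i^\ast(a;B)/B$ is $\int_0^a\beta(h)\,dh$, whose integrability against $p_\infty$ is exactly finiteness of $\mathcal{R}_C$, not of $\mathcal{R}_0$; this is implicit in the paper's standing assumptions but is worth stating precisely.
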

\begin{proof}
A non-uniform steady state age-distribution is a solution of the nonlinear system
\begin{subequations} \label{pde*}
\begin{align}
\begin{split} \label{PDE_s*}
\dfrac{ds^*(a)}{da}  &= - B^* \beta(a)  s^*(a),
\end{split}\\
\begin{split} \label{PDE_i*}
\dfrac{di^*(a)}{da} &= B^* \beta(a) s^*(a) -  (\phi(a)+\gamma(a)) i^*(a)+ B^*\rho(a) r^*(a) ,
\end{split}\\
\begin{split} \label{PDE_r*}
\dfrac{dr^*(a)}{da} &=  (\phi(a)+\gamma(a)) i^*(a) - B^*\rho(a) r^*(a) ,
\end{split}
\end{align}
with $a > 0$, along with boundary conditions
\begin{equation} \label{PDE_ic*}
s^*(0)=1,\quad  i^*(0)=0,\quad r^*(0)=0,
\end{equation}
where 
\begin{equation} \label{eq_B*}
B^* := \int_0^\infty i^*(a) p_\infty (a) da.
\end{equation}
\end{subequations}

Consider the linear system of equations with parameter $B$ given by 
\begin{subequations} \label{pde**}
\begin{align}
\begin{split} \label{PDE_s**}
\dfrac{ds_B^*(a)}{da}  &=  - B \beta(a)  s_B^*(a) ,
\end{split}\\
\begin{split} \label{PDE_i**}
\dfrac{di_B^*(a)}{da} &= B \beta(a) s_B^*(a) -( \phi(a)+\gamma(a)) i_B^*(a)+ B\rho(a) r_B^*(a) ,
\end{split}\\
\begin{split} \label{PDE_r**}
\dfrac{dr_B^*(a)}{da} &=  (\phi(a)+\gamma(a)) i_B^*(a) - B\rho(a) r_B^*(a) ,
\end{split}
\end{align}
along with boundary conditions
\begin{equation} \label{PDE_ic**}
s_B^*(0)=1,\quad  i_B^*(0)=0,\quad r_B^*(0)=0.
\end{equation}
\end{subequations}

Given the solution $(s_B^*(a), i_B^*(a), r_B^*(a))$ of system \eqref{pde**}, let
\begin{equation*}
H(B) := \int_0	^\infty i_B^*(a) p_\infty (a) da.
\end{equation*}
It is clear that $(s_B^*(a), i_B^*(a), r_B^*(a))$ satisfies system \eqref{pde*} if and only if $B$ is a fixed point of $H$; i.e., $H(B) = B$. Furthermore, if $B=0$ then 
$i_B^*(a) = 0$ and $H(0)=0$.
Thus, in order to guarantee existence of at least one non-trivial solution to \eqref{pde*}, it is just necessary to prove that $H(B)$ has a fixed point $B>0$. Consider the auxiliary function $\widehat{G}(B) = H(B)/B$ for $B\neq 0$. Clearly $\widehat{G}(B)$ is continuous in $[0,1]$ by defining $\widehat{G}(0) = H'(0)$. Since $H(1)<1$, we have that $\widehat{G}(1)<1$. Thus, we just need to prove that $\widehat{G}(0)>1$. If this is the case, then there exists $B\in (0,1)$ such that $\widehat{G}(B)=1$.

We then compute explicitly $\widehat{G}(B)$. First, from \eqref{PDE_s**}, we compute that
\begin{equation} \label{eq:sB*_sol}
s_B^*(a) = e^{-B\int_0^a\beta(h)dh} .
\end{equation}
Let $\bm{X}_B(a) = [i_B^*(a), r_B^*(a)]^T$. We can write \eqref{PDE_i**} and \eqref{PDE_r**} as the linear system
\begin{equation} \label{systemODES0}
\dfrac{d}{da	}\bm{X}_B(a) = M(a) \bm{X}_B(a)  + \bm{f}_B(a),
\end{equation}
where 
\begin{equation*}
M(a)=
  \begin{bmatrix}
    -(\phi(a)+\gamma(a)) & B\rho(a)\\
    \phi(a)+\gamma(a) & -B\rho(a)\\
  \end{bmatrix},\quad 
\bm{f}_B(a)=
  \begin{bmatrix}
    B\beta(a) s_B^*(a)\\
    0\\
  \end{bmatrix}.
\end{equation*}
We then observe that 
\begin{equation*}
\bm{X}^{(1)}_B(a) :=
\left[
\begin{array}{r}
1 \\ 
-1
\end{array} 
\right] e^{-\int_0^a (\phi(h)+\gamma(h)+ B\rho(h))dh}
\end{equation*}
is a solution of the homogeneous system $\dfrac{d}{da	}\bm{X}_B(a) = M(a) \bm{X}_B(a)$ with initial condition $\bm{X}^{(1)}_B(0) = [1, -1]^T$. By Abel's formula, we compute a second linear independent solution $\bm{X}^{(2)}_B(a)$ for the homogeneous system. After straightforward computations, a linear independent solution with initial condition $\bm{X}^{(2)}_B(0) = [0,1]^T$ is given by
\begin{equation*}
\bm{X}^{(2)}_B(a) :=
\left[
\begin{array}{r}
1 - e^{-\int_0^a (\phi(h)+\gamma(h)+ B\rho(h))dh} - \int_0^a (\phi(\tau)+\gamma(\tau)) e^{-\int_\tau^a  (\phi(h)+\gamma(h)+ B\rho(h))dh} d\tau  \\ 
e^{-\int_0^a (\phi(h)+\gamma(h)+ B\rho(h))dh}+\int_0^a (\phi(\tau)+\gamma(\tau)) e^{-\int_\tau^a  (\phi(h)+\gamma(h)+ B\rho(h))dh} d\tau
\end{array} 
\right].
\end{equation*}
Therefore, the fundamental matrix $\Phi(a)$ for system \eqref{systemODES0} is given by 
$$\Phi(a) = \left[\bm{X}^{(1)}_B(a)\quad \bm{X}^{(2)}_B(a) \right].$$
By variation of parameters, the solution of system \eqref{systemODES0} is given by $$\bm{X}_B(a)  = \Phi(a) \bm{C}(a),$$ where $\bm{C}(a)$ satisfies the equation 
\begin{equation} \label{eq:BCC}
\bm{C}'(a) = \Phi^{-1}(a) \bm{f}_B(a).
\end{equation}
Since 
$$\left[
\begin{array}{c}
0 \\ 
0
\end{array} 
\right] = \bm{X}_B(0)  = \Phi(0) \bm{C}(0) = 
\left[
\begin{array}{cc}
1 & 0 \\ 
-1 & 1
\end{array} 
\right] \bm{C}(0),$$
we deduce that $\bm{C}(0) = [0,0]^T$. Then, by integrating \eqref{eq:BCC} we obtain that 
\begin{align*}
\bm{C}(a) = \left[ \begin{array}{c}
C_1(a) \\ 
C_2(a)
\end{array} \right] = \left[
\begin{array}{c}
B\int_0^a   \beta(\tau) s_B^*(\tau) \left( 1+\int_0^\tau (\phi(t)+\gamma(t)) e^{\int_0^t (\phi(h)+\gamma(h)+B\rho(h))dh}\ dt \right) \ d\tau \\ 
B\int_0^a  \beta(\tau) s_B^*(\tau)\ d\tau
\end{array} 
\right].
\end{align*}
After some simplifications, we conclude that $r_B^*(a)$ and $i_B^*(a)$ are given explicitly by
\begin{align*}
r_B^*(a) = & \int_0^a (\phi(\tau) +\gamma(\tau))\left( 1- s_B^*(\tau )\right) e^{-\int_\tau^a ( \phi(h)+\gamma(h) + B \rho(h))\ dh}\ d\tau, \\
i_B^*(a) = &1 - s_B^*(a) - r_B^*(a).
\end{align*}
Since $\displaystyle \lim_{B\rightarrow 0}\dfrac{1-s_B(a)}{B} =  \int_0^a \beta (h)\ dh$,
we obtain that $$\widehat{G}(0) = \lim_{B\rightarrow 0} \widehat{G}(B) =  \int_0^\infty \int_0^a \beta(\tau) e^{-\int_\tau^a ( \phi(h)+\gamma(h) )dh} p_\infty (a)\ d\tau\ da = \mathcal{R}_0.$$
By hypothesis $ \mathcal{R}_0 >1 $. Therefore, there exists $B\in (0,1)$ such that $\widehat{G}(B) = 1$, completing our proof.
\end{proof}

\begin{remark} {\rm
In the case of constant coefficients, we can compute that $$\mathcal{R}_0 = \dfrac{\beta}{\mu+\phi+\gamma}, \quad \mathcal{R}_C = \dfrac{\beta}{\mu};$$ see \eqref{eq:RphiCC} below. 
Thus, a large value for $\beta$ implies the existence of at least one endemic non-uniform steady state, while a small value for $\beta$ guarantees that the disease tends to zero. In the next section we explore necessary and sufficient conditions that lead to the existence of a backward bifurcation.}
\end{remark}


\subsection{Conditions for a backward bifurcation}
In this section we explore conditions for the existence of two positive steady states when $\mathcal{R}_0<1$. For simplifity, it is assumed that $\mu$, $\beta$, $\rho$, $\phi$, $\gamma$ and $c$ are constant in the characterization of the backward bifurcation. 

\begin{remark} {\rm
We assume constant parameters as an illustrative example, however, initial conditions remain age-dependant.
}
\end{remark}

By solving \eqref{PDE_s**} we obtain that (see \eqref{eq:sB*_sol})
\begin{equation*}
s_B^*(a) := e^{-aB \beta } .
\end{equation*}
Moreover, \eqref{systemODES0} becomes a linear system with constant coefficients given by
\begin{equation} \label{systemODES}
\dfrac{d}{da	}\bm{X}_B(a) = M \bm{X}_B(a)  + \bm{f}_B(a),
\end{equation}
where 
\begin{equation*}
M=
  \begin{bmatrix}
    -(\phi+\gamma) & B\rho\\
    \phi+\gamma & -B\rho\\
  \end{bmatrix},\quad 
\bm{f}_B(a)=
  \begin{bmatrix}
    B\beta s_B^*(a)\\
    0\\
  \end{bmatrix}.
\end{equation*}
Observe that the eigenvalues of $M$ are given by $\lambda_1 = 0$ and $\lambda
_2 = - (\phi+\gamma) - B \rho$, with corresponding eigenvectors $\bm{v}_1 = [B\rho, \phi+\gamma]^T$, $\bm{v}_2 = [-1, 1]^T$. Then, the solution of \eqref{systemODES} is given by
\begin{align*}
i_B^*(a) =\ &\dfrac{ B\rho}{\phi+\gamma+B\rho}-\dfrac{B(\rho-\beta)}{(\phi+\gamma+B(\rho-\beta))}\cdot e^{-B\beta a}\nonumber \\
&-\dfrac{B \beta (\phi+\gamma) }{(\phi+\gamma+B\rho)(\phi+\gamma+B(\rho-\beta))}\cdot e^{-(\phi+\gamma+B\rho)a},
\end{align*}
and 
\begin{align*}
r_B^*(a) =\ &\dfrac{ \phi+\gamma}{\phi+\gamma+B\rho}-\dfrac{\phi+\gamma}{\phi+\gamma+B(\rho-\beta)}\cdot e^{-B\beta a}\nonumber \\
&+\dfrac{B \beta (\phi+\gamma) }{(\phi+\gamma+B\rho)(\phi+\gamma+B(\rho-\beta))}\cdot e^{-(\phi+\gamma+B\rho)a}.
\end{align*}
Note that $$p_\infty(a) = \lim_{t\rightarrow\infty} p(t,a)=\frac{\mathcal{F}(a)}{\int_0^\infty  \mathcal{F}(a)\ da} = \mu e^{-\mu a}.$$ 
After some simplifications, we obtain that 
\begin{align*} 
\widehat{G}(B) = \dfrac{B+\dfrac{\mu}{\rho}}{\left(B+\dfrac{\mu}{\beta}\right)\left( B+\dfrac{\mu+\phi+\gamma}{\rho}\right)}.
\end{align*}
In particular, the basic reproductive number is given by
\begin{equation} 	\label{eq:RphiCC}
\mathcal{R}_0 = \widehat{G}(0) = \dfrac{\beta }{\mu +\phi+\gamma}\ \quad \text{and }\ \quad \mathcal{R}_C = \dfrac{\beta}{\mu}.
\end{equation}

It can be proven that the equation $\widehat{G}(B)=1$ has two real roots $B\in (0,1)$ if and only if the following conditions are satisfied:
\begin{align} \label{eq_cond_bif}
&\mathcal{R}_0 <1 < \mathcal{R}_C,\nonumber \\ 
&\beta > \frac{\rho\mu}{\mu+(\sqrt{\rho}-\sqrt{\phi+\gamma})^2}. 
\end{align}

\begin{figure}[htb!]
\centering
\subfloat[Parameter space $\left(\frac{\phi+\gamma}{\mu},\frac{\beta}{\mu},\frac{\rho}{\mu}\right)$. \label{fig:condCoef}]{\includegraphics[width=0.45\linewidth]{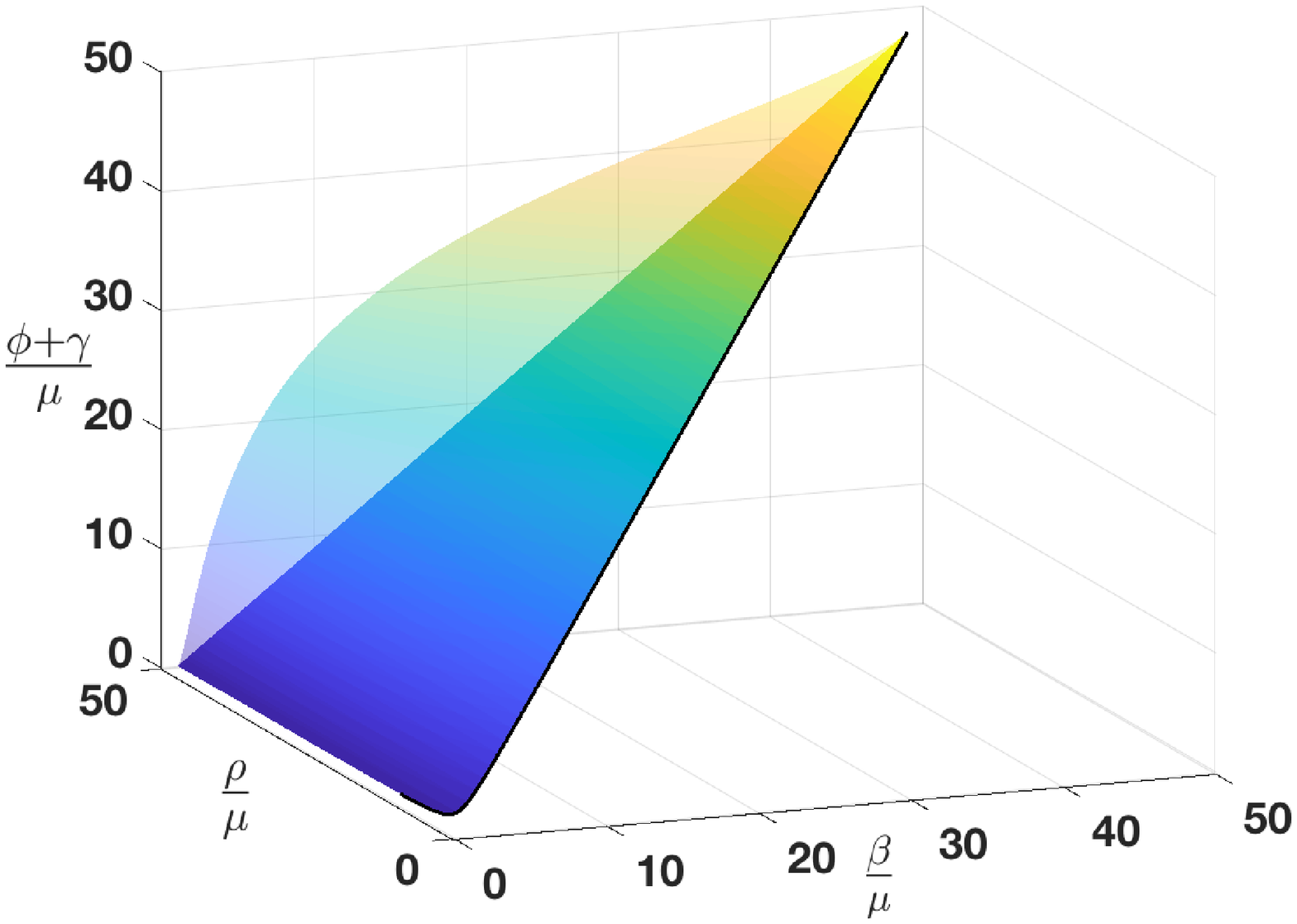}}
\hfill
\subfloat[$\widehat{G}(B)$ vs B. \label{fig:bif}]{\includegraphics[width=0.45\linewidth]{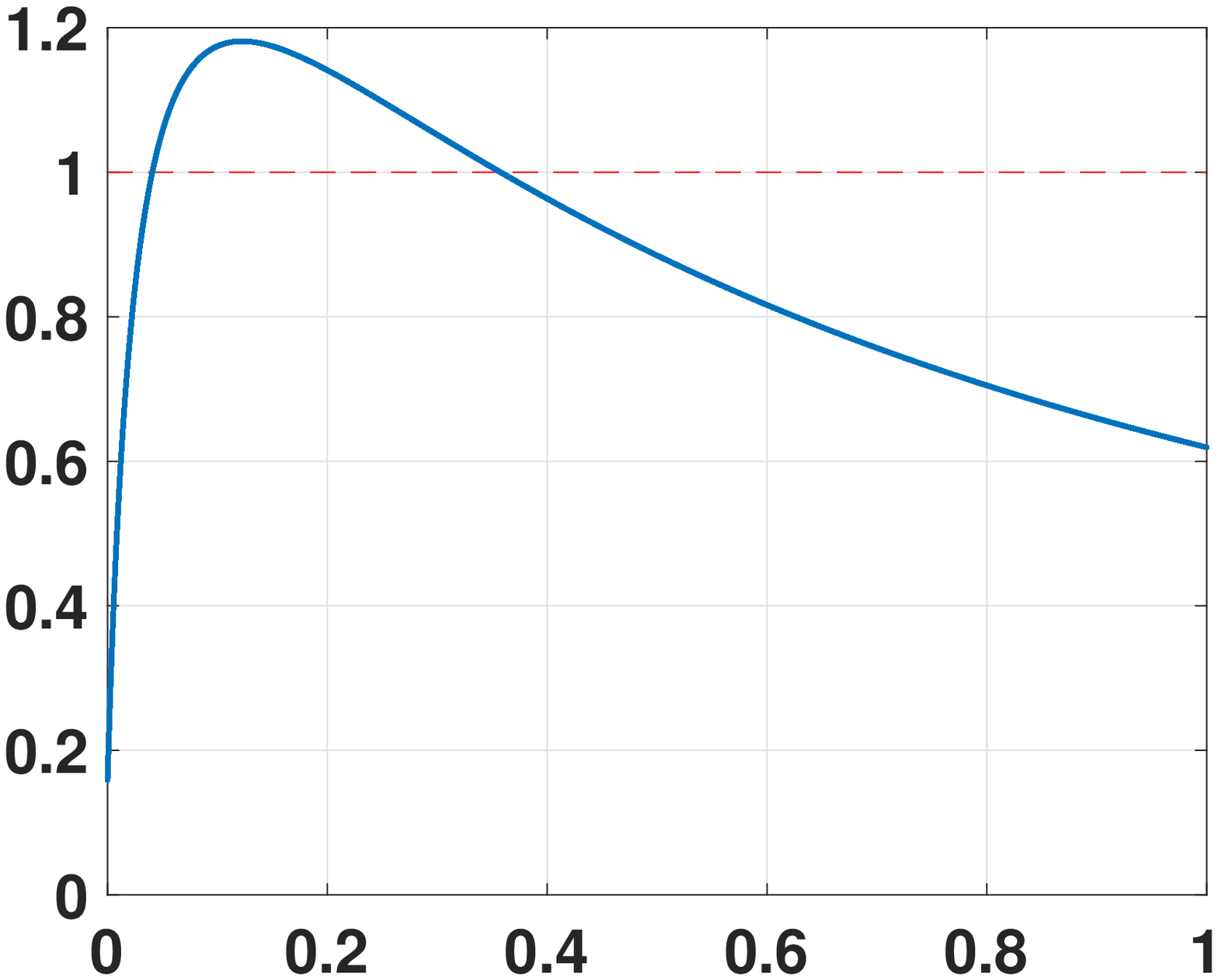}}
\caption{(a) Domain for which two non-zero solutions exist according to \eqref{eq_cond_bif}. (b) $\widehat{G}(B)$ for parameters satisfying \eqref{eq_cond_bif}; solutions to $\widehat{G}(B)=1$ correspond to non-trivial steady-state distributions for system \eqref{pde*}; see also \Cref{fig:i*}. \label{fig:GB}}
\end{figure} 

\begin{figure}[htb!]
\centering
\subfloat[$B^*\approx 0.04072$ \label{fig:i*a}]{\includegraphics[width=.45\linewidth]{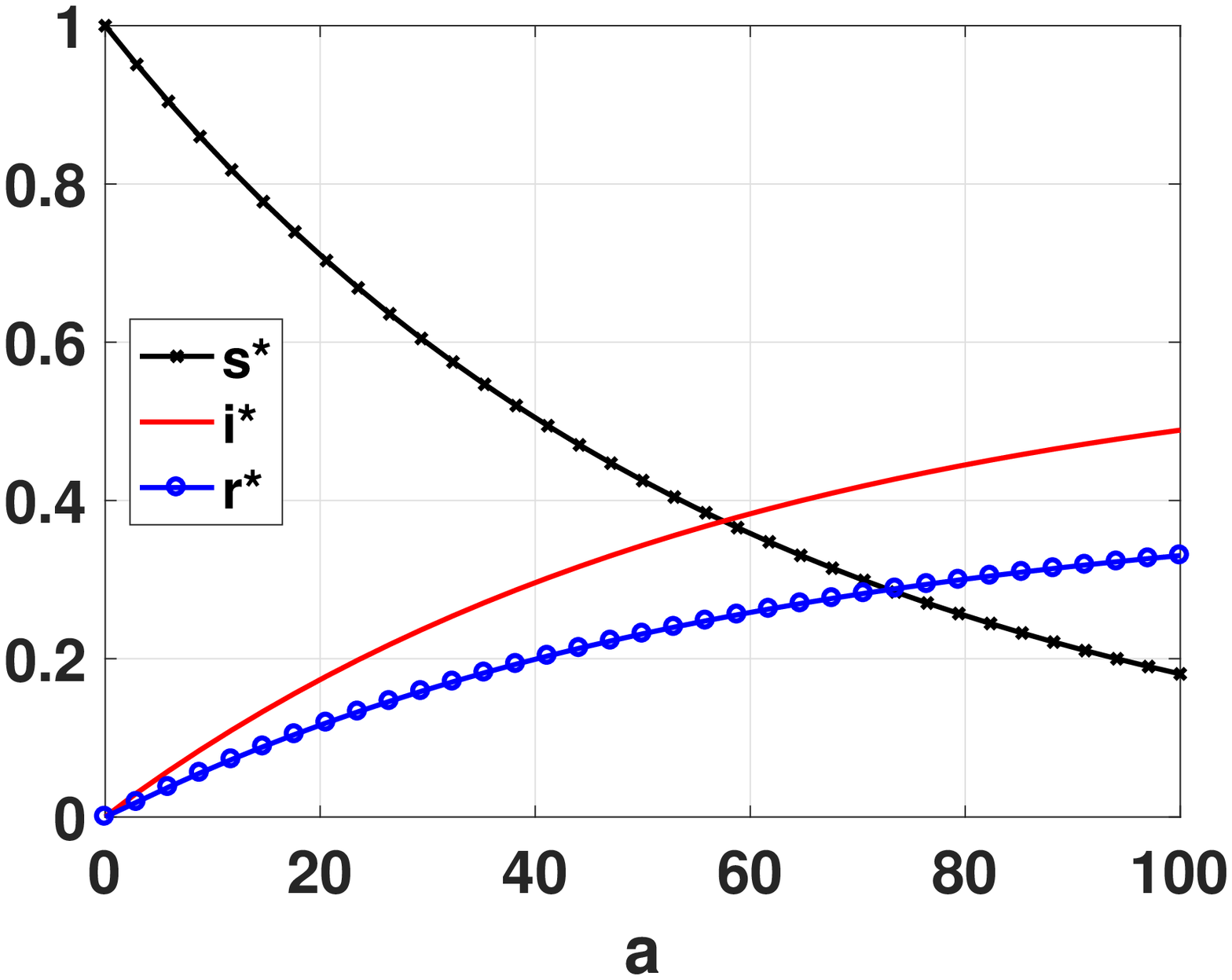}}
\hfill
\subfloat[$B^*\approx 0.35754$ \label{fig:i*b}]{\includegraphics[width=.45\linewidth]{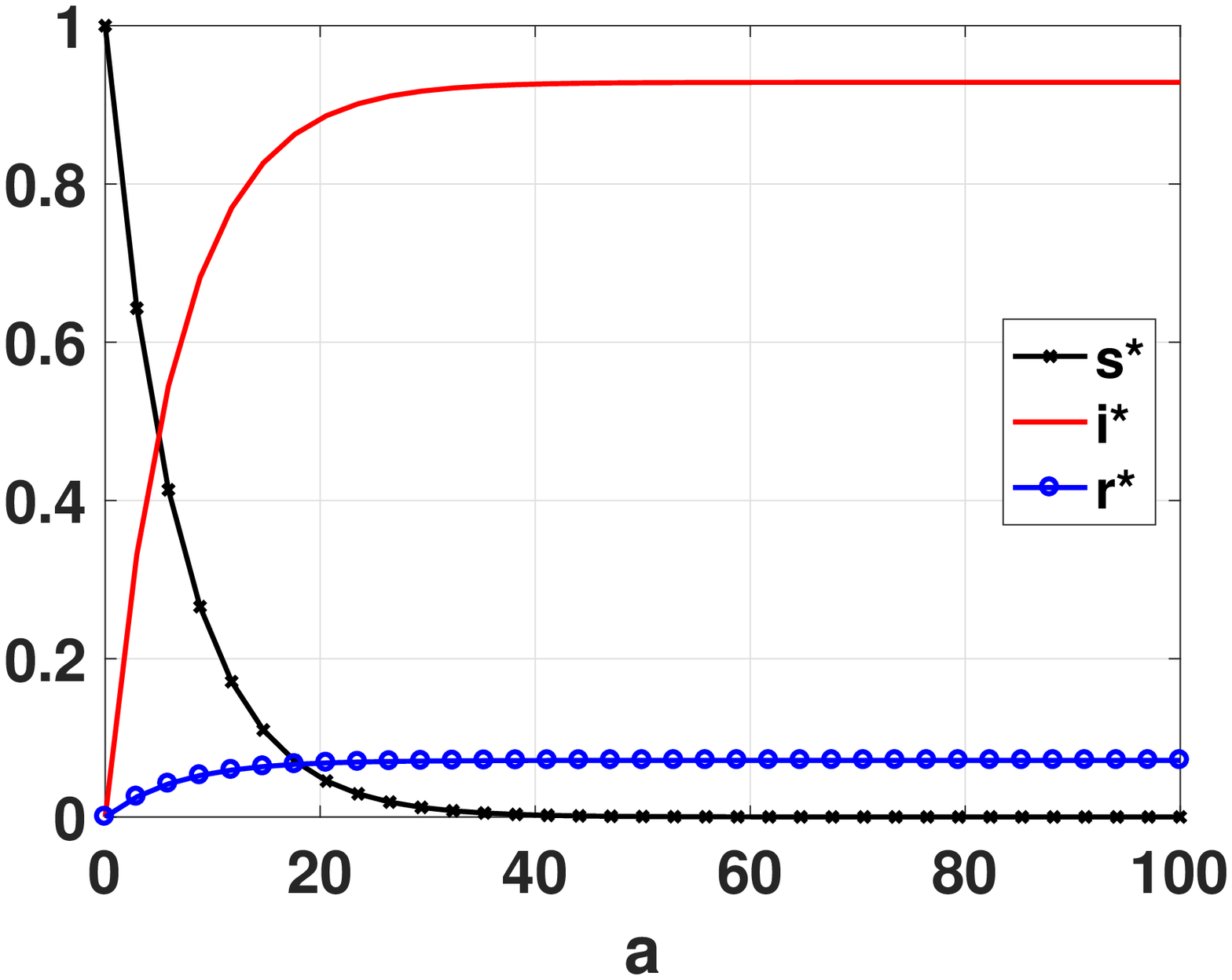}}
\caption{Two non-trivial solutions $(s^*(a),i^*(a),r^*(a))$ for system \eqref{pde*} with constant parameters.  \label{fig:i*}}
\end{figure} 

These conditions can be represented graphically as the interior of the volume depicted in \Cref{fig:condCoef} in the space $\left(\frac{\phi+\gamma}{\mu},\frac{\beta}{\mu},\frac{\rho}{\mu}\right)$. In \Cref{fig:bif} we present a typical graph for $\widehat{G}(B)$ with constant parameters $(\phi,\gamma,\beta,\rho,\mu)$ satisfying \eqref{eq_cond_bif}. It is clear that $\widehat{G}(B) = B$ has two different solutions, and for each value there is a non-trivial solution $(s^*(a),i^*(a),r^*(a))$ as shown in \Cref{fig:i*}. We remark that conditions \eqref{eq_cond_bif} are necessary and sufficient in order to guarantee existence of two non-zero steady states.

\begin{figure}[bt!]
   \centering
   \includegraphics[width=.6\textwidth]{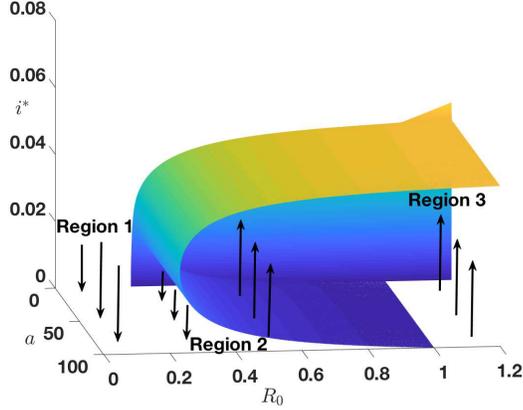}
   \caption{When conditions given in~\eqref{eq_cond_bif} hold we observe two non-zero solutions as shown in region 2 of the backward bifurcation. Parameter values are: $\mu=0.0125$, $\phi=60$, $\gamma=13$, $\rho=76.65$ as in Example \ref{sec:exparams}. \label{fig:backbif}}
\end{figure} 

In \Cref{fig:backbif} we show a backward bifurcation obtained by fixing $\mu, \rho, \phi, \gamma$ as given in \eqref{eq:coef:bif} and varying $\beta$. In this case, we have infected (infectious) individuals $i^*$ as a function of age $a$ and $\mathcal{R}_0 = \frac{\beta}{\mu+\phi+\gamma}$. This is typical behavior when we have a backward bifurcation and $\mathcal{R}_0<1$, which implies $\mathcal{R}_0<1$ is not a sufficient condition to control the disease. Hence, the condition of $\mathcal{R}_0 \ll 1$ is necessary to reach a infection-free state (see Region 1 in \Cref{fig:backbif}). When conditions \eqref{eq_cond_bif} are met, we observe two non-trivial solutions for $i^*(a)$ (see region 2). This implies that under the scenario where there is a critical mass of infectious individuals in a population, it would make the control of the disease more difficult. This is relevant for entities responsible to develop prevention/control strategies. For $\mathcal{R}_0>1$, \Cref{th*} implies the existence of a non-uniform steady state distribution (see Region 3). 

\section{Numerical experiments} \label{sec:exp}
In this section, we first describe the numerical implementation for solving system \eqref{pde}. We then present different examples confirming our theoretical results obtained in \Cref{sec:freesteadyState} and \Cref{sec:steadyState}. All parameters are in units of $year^{-1}$. After analyzing the model in its general form, we implement a numerical example based on the analogous nonlinear ordinary differential equation model in~\cite{sanchez2007}. That is, we look at the nonlinear dynamics of drinkers as an \lq\lq epidemic\rq\rq process where individuals who are considered {\it problem drinkers} can cause other individuals to start drinking. Eventually, these individuals can {\it temporarily} recover but as state in \cite{finney1999relapse,jin1998relapse,moss2007relapse}, relapse rates are high and the probability of never drinking again is small.

\subsection{Numerical implementation} \label{sec:numer_imp}
We will discretize \eqref{pde} with a first-order upwind finite difference scheme and will approximate the solution on the physical domain of interest given by the rectangle $\lbrace (t,a)\in [0,T]\times [0,A]\rbrace$. We first construct a uniform grid with equidistant points. Divide $[0,T]$ and $[0,A]$ into $N_T$ and $N_A$ subintervals, respectively. Thus, the nodes $(t_j,a_k)$ on the rectangular mesh are given by 
$$(t_j,a_k) = \left( j\Delta t, k\Delta a\right),$$
for $j\in\lbrace 0,1,\ldots,N_T\rbrace$, $k\in\lbrace 0,1,\ldots,N_A\rbrace$, where 
$$\Delta t := \frac{T}{N_T},\quad \Delta a := \frac{A}{N_A}$$ are the corresponding step sizes. We require that $\Delta t <\Delta a$ in order to satisfy the CFL and stability conditions of the scheme.

For any function $x$ and a grid point $(t_j,a_k)$, we denote the approximation of $x(t_j,a_k)$ by $x_k^j$. 
Since
\begin{align*}
\left( \frac{\partial}{\partial t} + \frac{\partial}{\partial a} \right) x(t_j,a_k) =&\  \frac{x(t_j+\Delta t, a_k)-x(t_j,a_k)}{\Delta t} + \mathcal{O}(\Delta t) \\
& \quad +  \frac{x(t_j,a_k)- x(t_j,a_{k}-\Delta a)}{\Delta a} + \mathcal{O}(\Delta a),
\end{align*}
we approximate the derivatives by
\begin{equation*}
\left( \frac{\partial}{\partial t} + \frac{\partial}{\partial a} \right) x(t_j,a_k) \approx  \frac{x^{j+1}_{k} - x^{j}_{k}}{\Delta t} + \frac{x^j_{k} - x^{j}_{k-1}}{\Delta a}.
\end{equation*}
Similarly, define $\beta_k:=\beta(a_k)$, $\phi_k:=\phi(a_k)$,  $\gamma_k:=\gamma(a_k)$, $\rho_k:=\rho(a_k)$ and $B^j:=B(t_j)$. Then, by evaluating at all the grid points, the discretization for system \eqref{pde} is given by the explicit system
\begin{align} \label{PDE:discrete}
\frac{s^{j+1}_{k} - s^{j}_k}{\Delta t} + \frac{s^j_k - s^{j}_{k-1}}{\Delta a} &= - \beta_k s^j_k B^j,\nonumber\\
\frac{i^{j+1}_k - i^{j}_k}{\Delta t} + \frac{i^j_k - i^{j}_{k-1}}{\Delta a} &= \beta_k s^j_k B^j - (\phi_k+\gamma_k) i^j_k+\rho_k r_k^j B^j , \nonumber \\
\frac{r^{j+1}_k - r^{j}_k}{\Delta t} + \frac{r^j_k - r^{j}_{k-1}}{\Delta a} &= (\phi_k+\gamma_k) i^j_k-\rho_k r_k^j B^j,
\end{align}
for $1\leq k\leq N_A$ and $0\leq j\leq N_T-1$. We recall that the initial conditions \eqref{pde_ic} provide the values $s_0^j$, $s_k^0$, $i_0^j$, $i_k^0$, $r_0^j$, and $r_k^0$. The integral $$B^j = B(t_j) = \int_0^\infty i(t_j,a) p(t_j,a)\ da$$ is approximated via MATLAB's command \texttt{integral}. We note that for a fixed time $t_j$, $B^j$ depends only on values of $i$ at the same time $t_j$.

Solving for $s^{j+1}_k$, $i^{j+1}_k$ and $r^{j+1}_k$ from \eqref{PDE:discrete}, we obtain that
\begin{subequations}
\begin{align*}
\begin{split} 
s^{j+1}_k  &= s^{j}_k + \Delta t \left(- \beta_k s^j_k B^j  - \frac{s^j_k - s^{j}_{k-1}}{\Delta a}\right),
\end{split}\\
\begin{split}
i^{j+1}_k &= i^{j}_k +  \Delta t\left( \beta_k s^j_k B^j - (\phi_k+\gamma_k) i^j_k+\rho_k r_k^j B^j -  \frac{i^j_k - i^{j}_{k-1}}{\Delta a} \right),
\end{split}\\
\begin{split} 
r^{j+1}_k &= r^{j}_k+ \Delta t\left( (\phi_k+\gamma_k) i^j_k-\rho_k r_k^j B^j  -  \frac{r^j_k - r^{j}_{k-1}}{\Delta a} \right),
\end{split}
\end{align*}
\end{subequations}
for $1\leq k\leq N_A$ and $0\leq j\leq N_T-1$. Therefore, starting with the initial conditions $s_0^j$, $s_k^0$, $i_0^j$, $i_k^0$, $r_0^j$, $r_k^0$, we can compute, in successive time steps, the values of the unknowns on the grid points. The convergence of the scheme has been established in \cite{shim2006}.

\subsection{Global stability} \label{ex:R0lt1}
In this example we confirm the result shown in \Cref{th:globalStab}, where global stability 
to the infection-free state
is guaranteed when $\mathcal{R}_C<1$. We consider constant coefficients given by
\begin{equation*}
\begin{array}{ccccc}
\mu = 0.0125,&\beta= 0.011,& \phi= 60,& \gamma=13,& \rho=76.65.
\end{array}
\end{equation*}
Thus, $\mathcal{R}_0 \approx  1.5\cdot 10^{-4}$ and $\mathcal{R}_C = 0.88$. We then obtain $i(t,a)$ as shown in \Cref{fig:i_r0_lt1}. The parameter values used in our model are based on ~\cite{sanchez2007}.

\begin{figure}[t!]
\centering
\subfloat[Solutions of the system go to zero. \label{fig:bifurcation-1}]{\includegraphics[width=.45\linewidth]{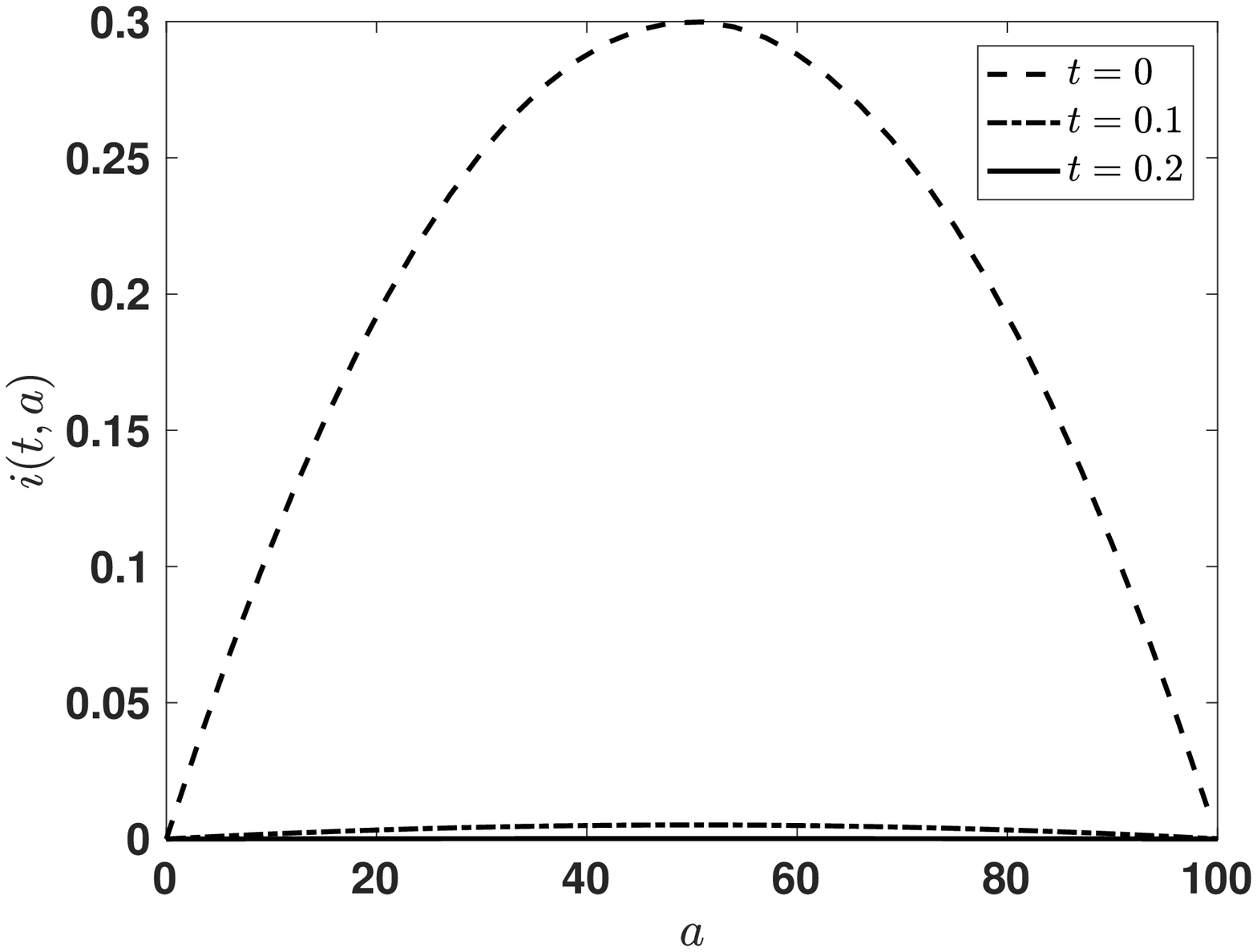}}     
\hfill
\subfloat[Solution for $i(t,a)$. \label{fig:i_r0_lt1}]{\includegraphics[width=.45\linewidth]{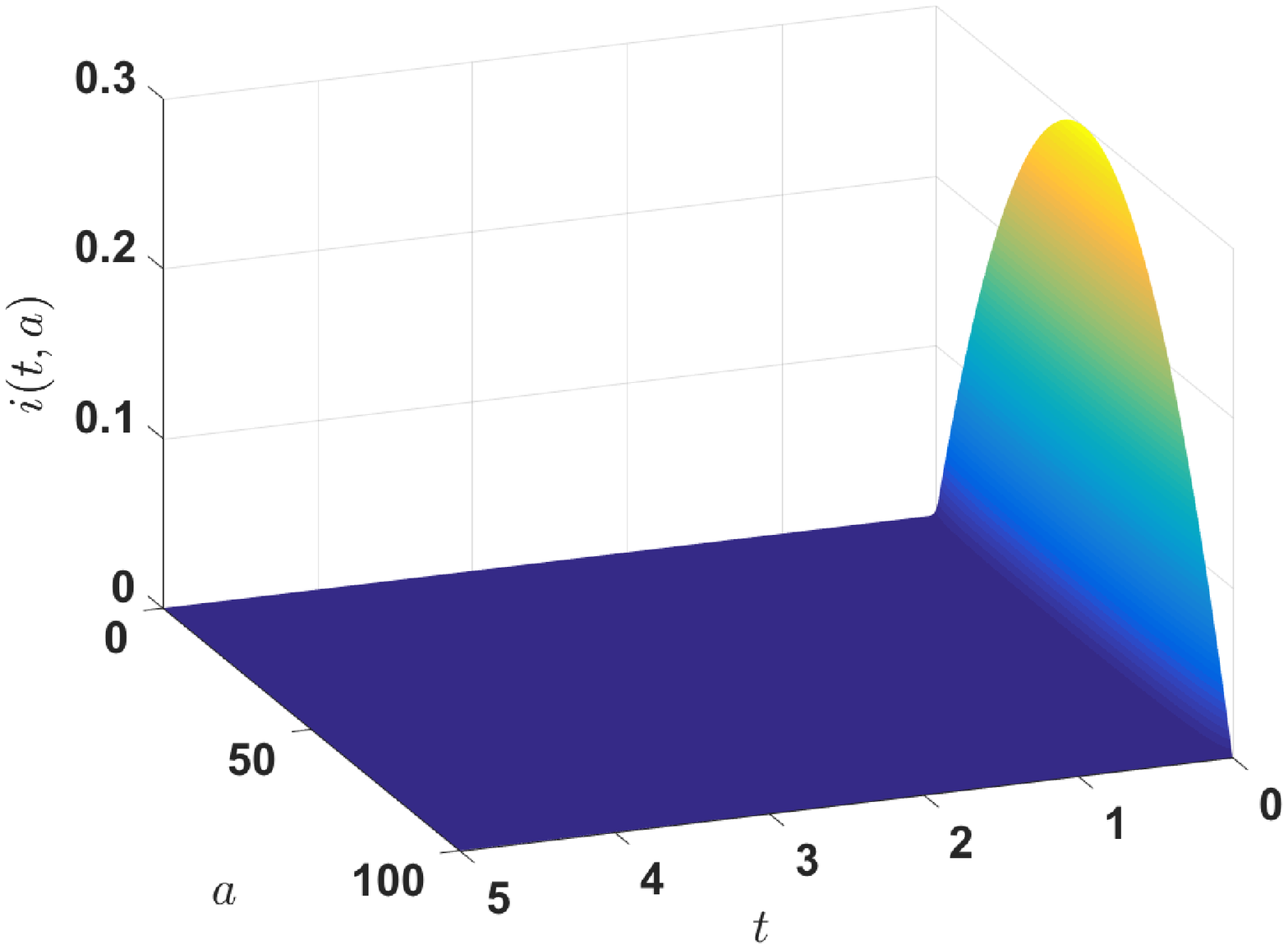}}
\caption{Numerical solution $i(t,a)$ with fixed parameters given in \eqref{eq:coef:bif} where $\mathcal{R}_0<1$ and $\mathcal{R}_C<1$.}
\end{figure} 

For the given set of parameters all solutions for $i(t,a)$ go to zero as $t\rightarrow\infty$. In this case, we are in the first region of the backward bifurcation (see \Cref{fig:backbif}) where $\mathcal{R}_0 \ll 1$. Here $\beta<\rho$ and $\beta$ is not sufficiently large to sustain enough drinkers in the system.

\subsection{Backward bifurcation} \label{sec:exparams}
We now consider a particular choice of parameters that satisfy conditions \eqref{eq_cond_bif}. In particular, we fix 
\begin{equation} \label{eq:coef:bif}
\begin{array}{ccccc}
\mu = 0.0125,&\beta= 60,& \phi= 60,& \gamma=13,& \rho=76.65.
\end{array}
\end{equation}
For this case, 
\begin{equation*}
\mathcal{R}_0 \approx 0.8218\quad {\rm and }\quad \mathcal{R}_C = 4800.
\end{equation*}

We are now in region 2 of \Cref{fig:backbif}, where we need $\mathcal{R}_0<1<\mathcal{R}_C$. We then have two non-trivial steady-state solutions for system \eqref{pde*} depicted in \Cref{fig:i*}. We then obtain $i(t,a)$ as described in \Cref{sec:numer_imp} for two different initial conditions $i(0,a)$ ; see \Cref{fig:sol2}. 


 
\begin{figure}[t!]
\subfloat[Solutions of the system go to endemic state when initial conditions are large enough.\label{fig:sol2a1}]{\includegraphics[width=.45\linewidth]{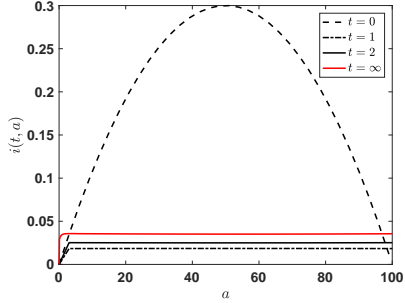}}
\hfill
\subfloat[Time and age series of $i(t,a)$.\label{fig:sol2a}]{ \includegraphics[width=.45\linewidth]{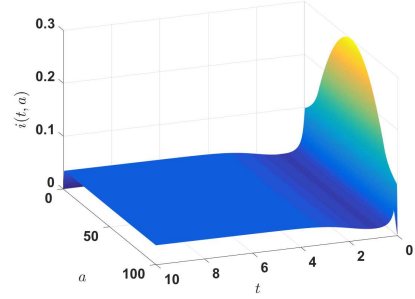}}                   
\hfill
\subfloat[Solutions of the system go to infection-free state when initial conditions are sufficiently small.\label{fig:sol2a2}]{\includegraphics[width=.45\linewidth]{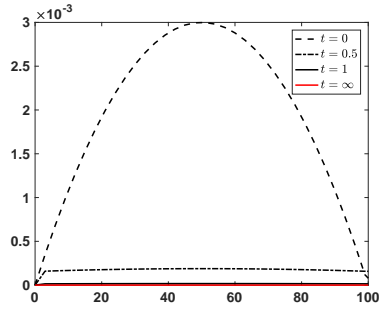}}
\hfill
\subfloat[Time and age series of $i(t,a)$.\label{fig:sol2b}]{\includegraphics[width=.45\linewidth]{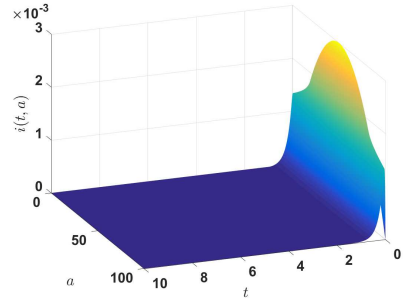}}
\caption{Numerical solution $i(t,a)$ with fixed parameters given in \eqref{eq:coef:bif} and two different initial conditions $i(0,a)$ when $\mathcal{R}_0<1$ and $\mathcal{R}_C>1$. \label{fig:sol2}}
\end{figure} 

In \Cref{fig:sol2a} we obtain the stable solution that corresponds to the steady-state distribution shown in \Cref{fig:i*b}. For a smaller initial condition $i(0,a)$, we confirm that the steady-state distribution shown in \Cref{fig:i*a} is unstable, since $i(t,a)$ tends to the (stable) zero solution; see \Cref{fig:sol2b}. In this case, albeit having $\beta<\rho$, $\beta$ is sufficiently large to sustain multiple drinking steady states dependent on the initial state of the drinking population. That is, $\mathcal{R}_0<1$ is not a sufficient condition for the stability of the {\it drinking-free} state. 

%

\subsection{Endemic non-uniform steady state distribution}  \label{ex:Rgt1}
According to \Cref{th*}, there exists a non-uniform steady state distribution if $\mathcal{R}_0>1$. In this example we take
\begin{equation*}
\begin{array}{ccccc}
\mu = 0.0125,&\beta= 120,& \phi= 60,& \gamma=13,& \rho=76.65,
\end{array}
\end{equation*}
for which $\mathcal{R}_0 \approx  1.6436$.
We obtain $i(t,a)$ as shown in \Cref{fig:ex23}. It is clear that $i(t,a)$ converges to a non-zero solution.
\begin{figure}[htb]
\centering
\subfloat[Solutions of the system go to endemic state.\label{fig:bifurcation-3}]{\includegraphics[width=.45\linewidth]{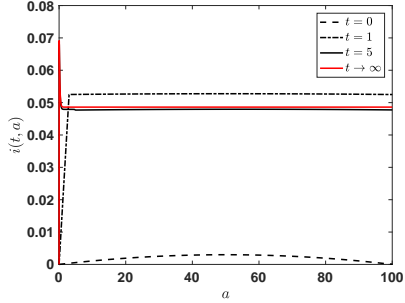}}
\hfill
\subfloat[Time and age series of $i(t,a)$.\label{fig:i_r0_lt1b}]{\includegraphics[width=.45\linewidth]{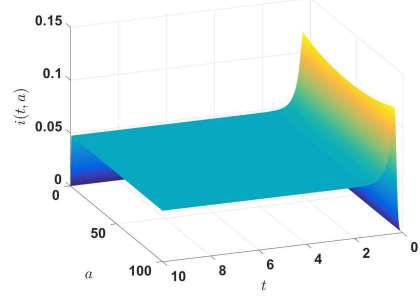}}
\caption{Numerical solution $i(t,a)$ when $\mathcal{R}_0>1$ and $\mathcal{R}_C>1$. \label{fig:ex23}}
\end{figure} 

Here $\beta>\rho$ and we have $\mathcal{R}_0>1$, which is sufficient for the drinking steady state to prevail and the initial conditions do not play a role (see \Cref{fig:backbif}).

\subsection{Numerical experiments with age-dependent parameters} \label{ex:var_coef}
Lastly, we present an example where the parameters $\phi(a)$, $\gamma(a)$ $\beta(a)$, $\rho(a)$, $\mu(a)$ and $c(a)$ are age-dependent with particular distributions; see \Cref{fig:var_coef:}. These were chosen arbitrarily, however, we pursued biological significance when choosing each parameter distribution. 

\begin{figure}[th!]
	\centering
	\includegraphics[width=0.55\textwidth]{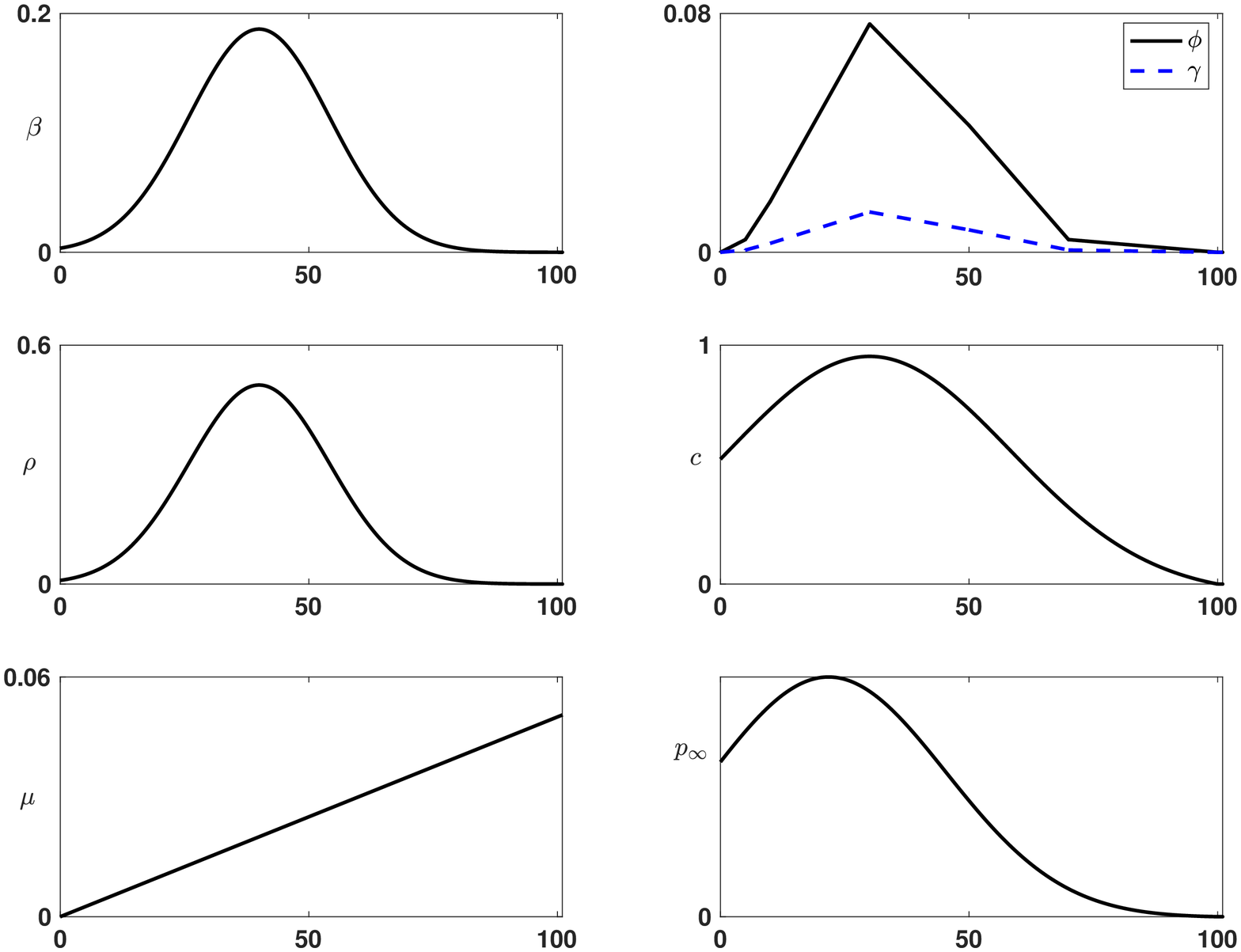}
      \caption{Age-dependent coefficients considered in Example \ref{ex:var_coef}. In particular we chose $\rho(a)$ to have a higher value than $\beta(a)$ for certain age groups given that research shows that individuals who have been been alcoholics are prone to relapse at a higher rate than individuals who have never been alcoholics~\cite{finney1999relapse,jin1998relapse,moss2007relapse}. \label{fig:var_coef:}}
\end{figure}

\begin{figure}[t!]
\centering
\subfloat[$\mathcal{R}_0<1$, $\mathcal{R}_C<1$. \label{fig:var_coef_reg1}]{\includegraphics[width=.45\linewidth]{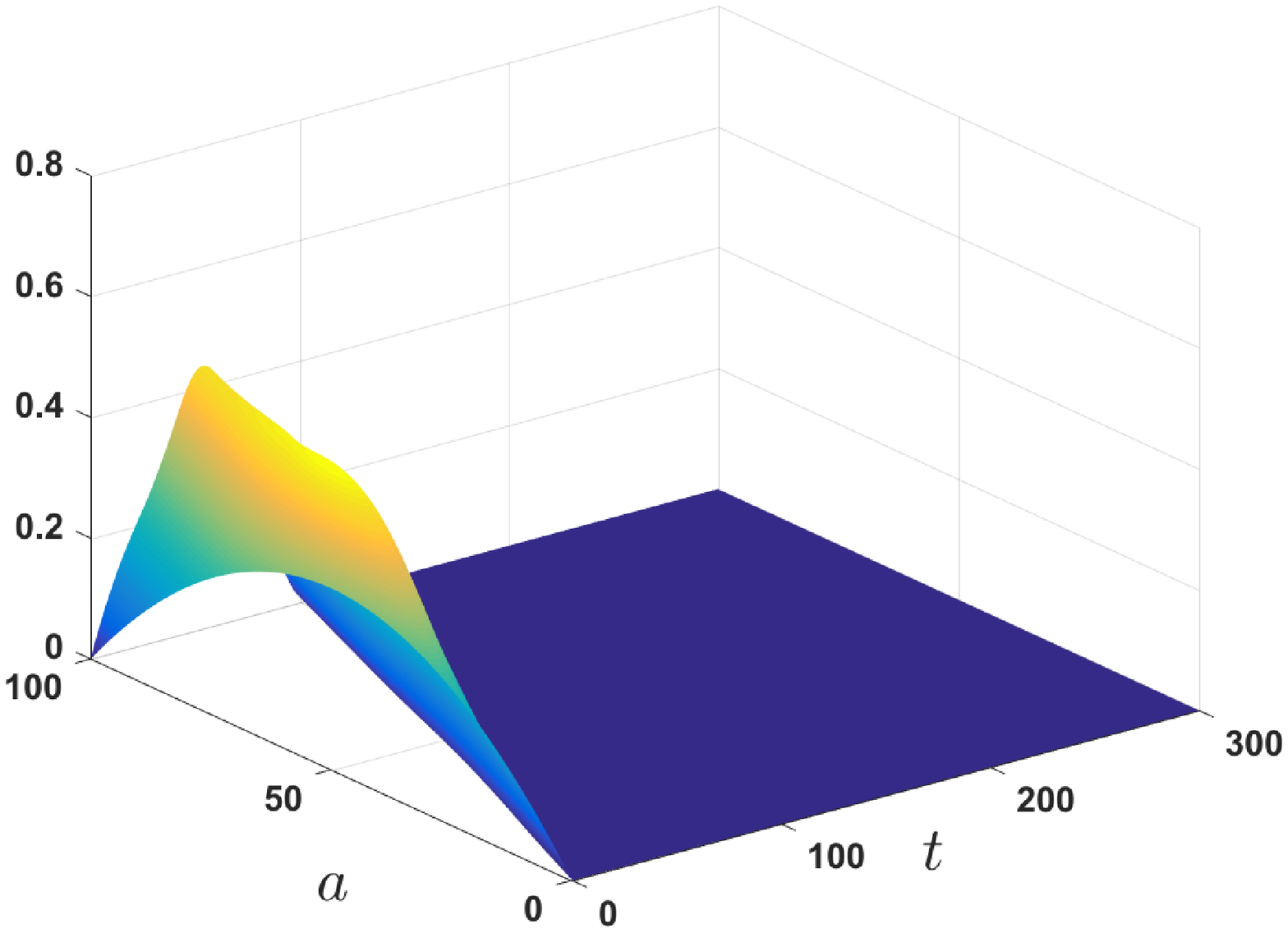}}
\hfill
\subfloat[$\mathcal{R}_0>1$, $\mathcal{R}_C>1$. \label{fig:var_coef_reg3}]{\includegraphics[width=.45\linewidth]{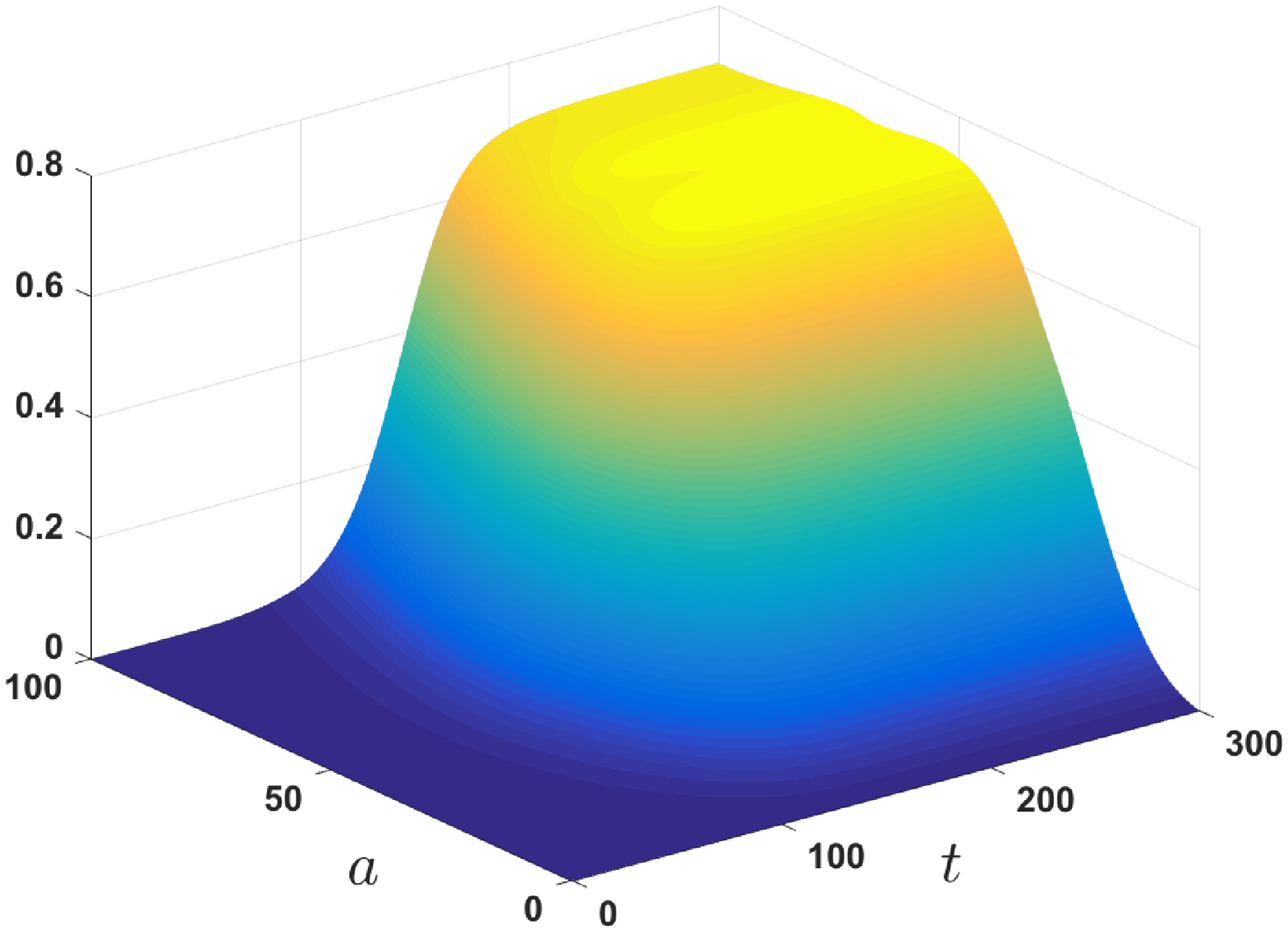}}
\hfill
\subfloat[$\mathcal{R}_0<1$, $\mathcal{R}_C>1$.\label{fig:var_coef_reg2a}]{\includegraphics[width=.45\linewidth]{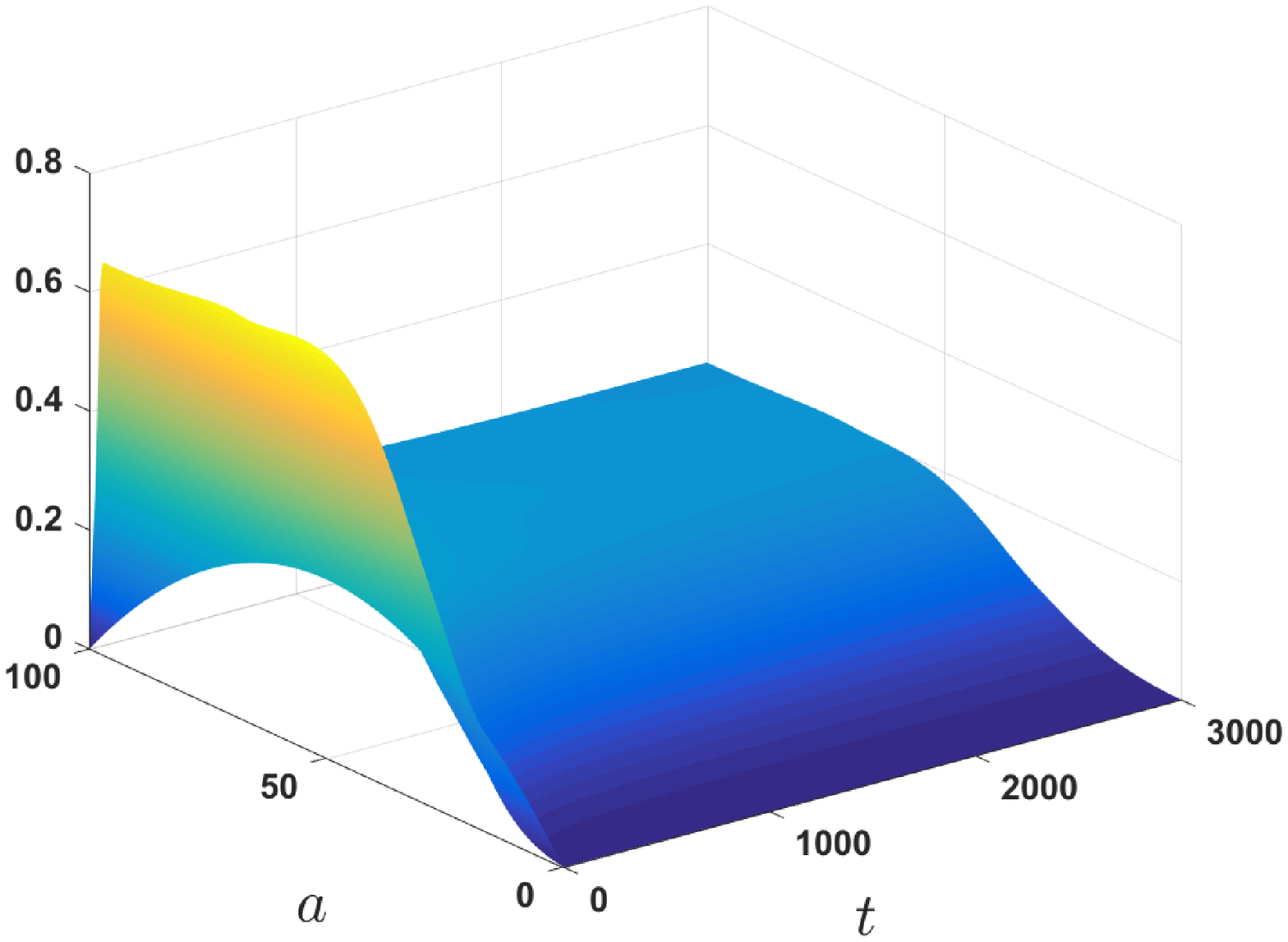}}
\hfill
\subfloat[$\mathcal{R}_0<1$, $\mathcal{R}_C>1$.\label{fig:var_coef_reg2b}]{\includegraphics[width=.45\linewidth]{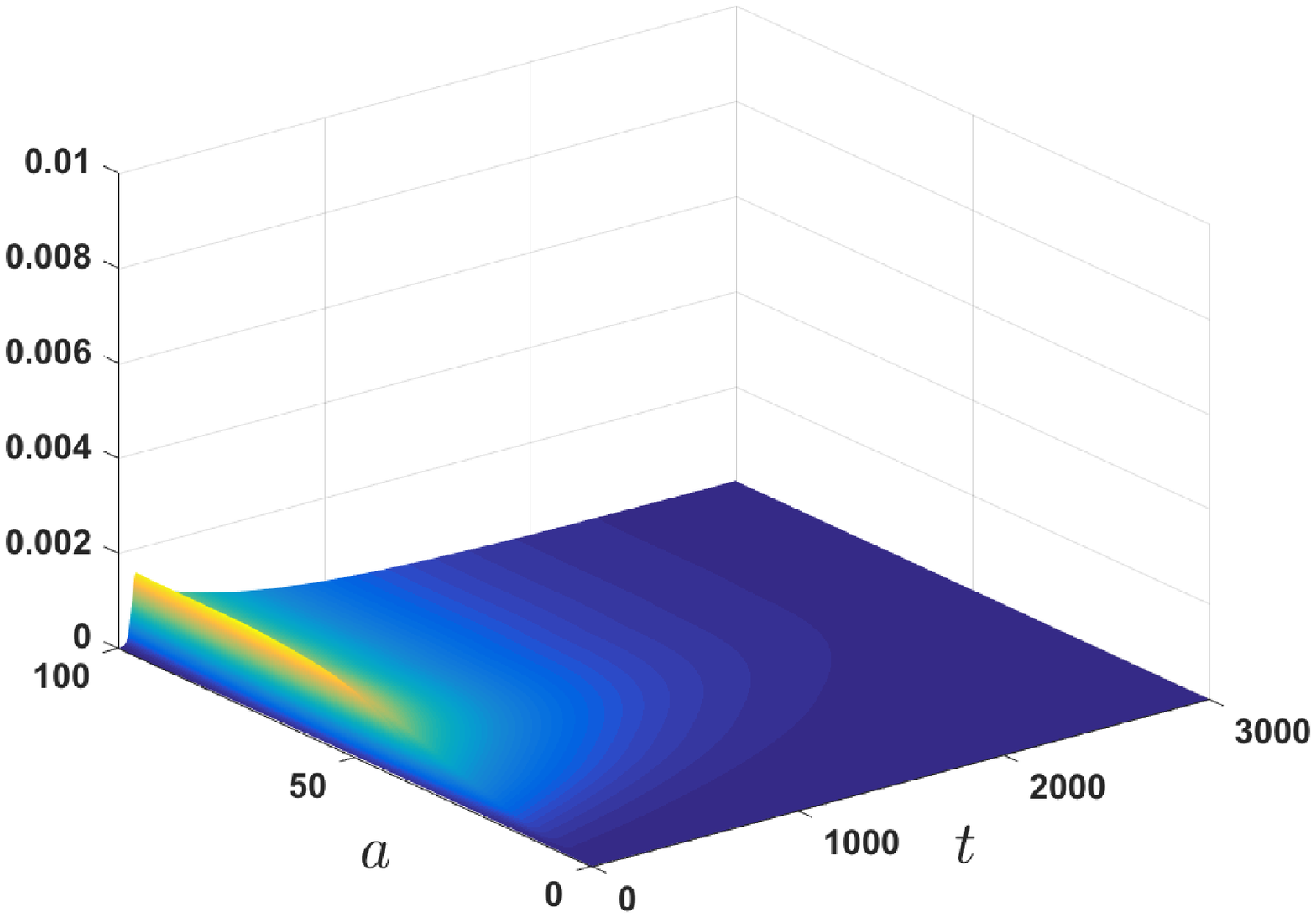}}
\caption{Numerical solution $i(t,a)$ with age-dependent parameters $\phi(a)$, $\gamma(a)$, $\beta(a)$, $\rho(a)$ and different values of $\mathcal{R}_0$ and $\mathcal{R}_C$. Two different initial conditions $i(0,a)$ when $\mathcal{R}_0<1$ and $\mathcal{R}_C>1$ are used in  \Cref{fig:var_coef_reg2a,fig:var_coef_reg2b}; see Example \ref{ex:var_coef}. \label{fig:var_coef}}
\end{figure} 

\begin{figure}[thb!]
	\centering
	\includegraphics[width=0.6\textwidth]{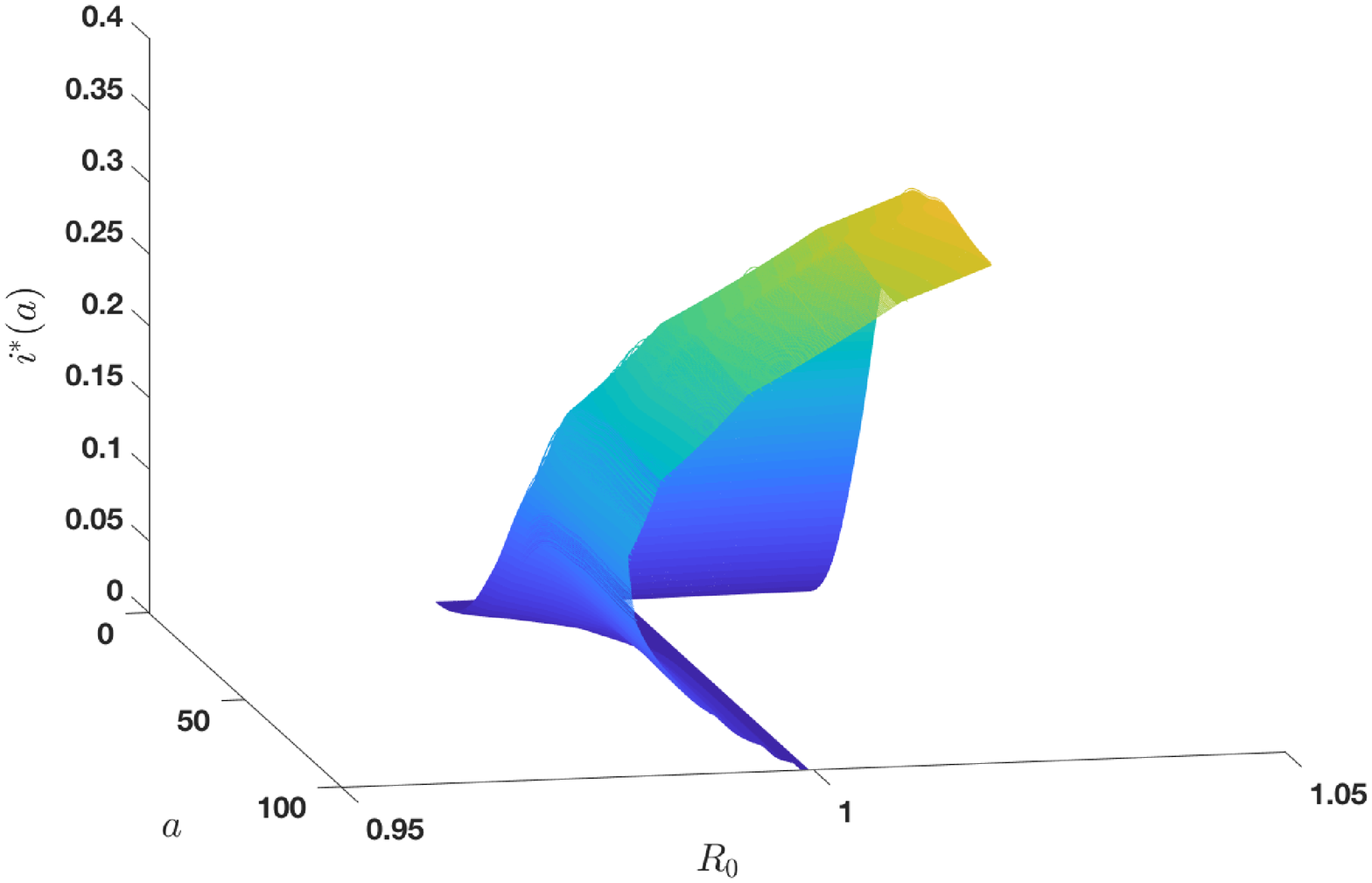}
      \caption{Backward bifurcation with age-dependent parameters $\phi(a)$, $\gamma(a)$, $\beta(a)$, $\rho(a)$, $\mu(a)$; see Example \ref{ex:var_coef}. \label{fig:bifurcation_var_coef}}
\end{figure} 
 
In \Cref{fig:var_coef_reg1}, we see that the solution $i(t,a)$ goes to the infection-free state when $\mathcal{R}_C<1$, as shown in \Cref{th:globalStab}, where global stability was attained. \Cref{fig:var_coef_reg3} illustrates an endemic state when $\mathcal{R}_0>1$; see \Cref{th*}. The case $\mathcal{R}_0<1<\mathcal{R}_C$ is shown in \Cref{fig:var_coef_reg2a,fig:var_coef_reg2b}; by using different initial conditions we have the possibility of multiple steady states as shown in the bifurcation diagram \Cref{fig:bifurcation_var_coef}.

Overall, when parameters are functions of $a$ we can obtain the same behavior as when parameters are constant. This is more realistic but has proven to be challenging to obtain explicit conditions for each case. Numerically, albeit challenging, it is possible to explore the parameter space of the bifurcation~\Cref{fig:bifurcation_var_coef} and produce similar results. Numerical results are highly dependent on the approximation of $B(t)$, which depends on the unknown $i(t,a)$. In order to compute efficiently and accurately $B^j$, we have used Chebyshev expansions based on \texttt{MATLAB}'s library \texttt{Chebfun} \cite{Driscoll2014}. \Cref{fig:bifurcation_var_coef} was computed by solving $\widehat{G}(B) = B$ for several values of $\beta$, where $(s^*(a),i^*(a),r^*(a))$ are computed by using the command \texttt{sum}, which allows to improve running times. Further exploration of the model with variable coefficients is required. 
 
\section{Conclusions} \label{sec:conc}
We have studied an age-structured epidemic model with nonlinear recidivism. The infection-free steady state distribution was computed, as well as its local and global stability. The existence of the endemic non-uniform steady state distribution is guaranteed when $\mathcal{R}_0>1$. Moreover, our analysis shows the existence of multiple endemic equilibria when $\mathcal{R}_0<1$, and necessary conditions that lead to a backward bifurcation were computed. As a numerical example we used alcohol dynamics to showcase the results of the model.

Numerical experiments were conducted to illustrate the different scenarios where there exist multiple drinking states. Typically, $\mathcal{R}_0<1$ is a sufficient condition for a \lq\lq disease" to die out. However, when incorporating social factors, in this case there exists the possibility of an endemic state even when $\mathcal{R}_0<1$. In our model, the {\it basic reproductive number} is a function of the treatment and recovery rate of individuals. 
We have necessary conditions for two positive endemic steady states, which highlights the importance and significance of the initial density of the drinking population. In this case, it is possible to have an endemic state when $\mathcal{R}_0<1$ if the initial number of drinkers is large enough, as seen in \Cref{fig:sol2a1}. Short treatment periods, in turn mostly ineffective, induce more individuals into the temporarily recovered class where the likelihood of relapse is high. This creates a population of new susceptible individuals whose susceptibility and risk can be measured by the strength of environmentally induced recidivism rates. Therefore making prevention strategies far more challenging.

The implications of an age-structure model with nonlinear recidivism could lead to a better understanding of applications where age is an important factor when implementing prevention/control strategies. 


\section*{Acknowledgements}
The authors would like to thank the Research Center in Pure and Applied Mathematics and the Mathematics Department at Universidad de Costa Rica for their support during the preparation of this manuscript. 

\FloatBarrier


\begin{thebibliography}{99}
\bibitem{allen2008mathematical}
{\sc Allen, L.~J., Brauer, F., Van~den Driessche, P., and Wu, J. (2008).}
\newblock {\em Mathematical Epidemiology}, Vol. 1945.,
\newblock Springer.


\bibitem{arino2003global}
{\sc Arino, J., McCluskey, C.~C., and van~den Driessche, P. (2003).}
\newblock Global results for an epidemic model with vaccination that exhibits backward bifurcation.
\newblock {\em SIAM J. Appl. Math.} {\bf 64}(1):260--276.

\bibitem{brauer2004}
{\sc Brauer, F. (2004).}
\newblock Backward bifurcations in simple vaccination models.
\newblock {\em J Math Anal Appl.} {\bf 298}(2):418 -- 431.

\bibitem{brauer2011}
{\sc Brauer, F. (2011).}
\newblock Backward bifurcations in simple vaccination/treatment models.
\newblock {\em J Biol Dyn.} {\bf 5}(5):410--418.

\bibitem{ccc2002}
{\sc Castillo-Chavez, C. and Huang, W. (2002).}
\newblock Age-structured core group model and its impact on std dynamics.
\newblock Mathematical Approaches for Emerging and Reemerging Infectious Diseases: Models, Methods, and Theory, Editors: Castillo-Chavez, C., Blower, S., Driessche, P. van d., Kirschner, D. and Yakubu, A.-A.

\bibitem{ccc2004}
{\sc Castillo-Chavez, C., and Song, B. (2004).}
\newblock Dynamical models of tuberculosis and their applications.
\newblock {\em Math Biosci Eng.} {\bf 1}(9):361--404.

\bibitem{Driscoll2014}
{\sc Driscoll, T.~A., Hale, N., and Trefethen, L.~N. (2014).}
\newblock {\em Chebfun Guide}.
\newblock Pafnuty Publications.

\bibitem{feng2000model}
{\sc Feng, Z., Castillo-Chavez, C., and Capurro, A.~F. (2000).}
\newblock A model for tuberculosis with exogenous reinfection.
\newblock {\em Theor Popul Biol.} {\bf 57}(3):235--247.

\bibitem{finney1999relapse}
{\sc Finney, J, M. R. T.~C. (1999).}
\newblock {\em The course of treated and untreated substance use disorders: remission and resolution, relapse and mortality}.
\newblock Oxford University Press.

\bibitem{holmes1994partial}
{\sc Holmes, E.~E., Lewis, M.~A., Banks, J., and Veit, R. (1994).}
\newblock Partial differential equations in ecology: spatial interactions and population dynamics.
\newblock {\em Ecology.} {\bf 75}(1):17--29.

\bibitem{jin1998relapse}
{\sc Jin, H., Rourke, S.~B., Patterson, T.~L., Taylor, M.~J., and Grant, I. (1998).}
\newblock Predictors of relapse in long-term abstinent alcoholics.
\newblock {\em J Stud Alcohol.} {\bf 59}:640--646.

\bibitem{kot2001elements}
{\sc Kot, M. (2001).}
\newblock {\em Elements of Mathematical Ecology}.
\newblock Cambridge University Press.

\bibitem{kribs2002}
{\sc Kribs-Zaleta, C.~M., and Martcheva, M. (2002).}
\newblock Vaccination strategies and backward bifurcation in an age-since-infection structured model.
\newblock {\em Math Biosci.} {\bf 177-178}(Supplement C):317--332.
  
\bibitem{martcheva2003progression}
{\sc Martcheva, M., and Thieme, H.~R. (2003).}
\newblock Progression age enhanced backward bifurcation in an epidemic model with super-infection.
\newblock {\em J Math Biol.} {\bf 46}(5):385--424.  

\bibitem{moss2007relapse}
{\sc Moss, R.~H., and Moss, B.~S. (2007).}
\newblock Rates and predictors of relapse after natural and treated remission from alcohol use disorders.
\newblock {\em Addiction.} {\bf 101}(2):212--222.

\bibitem{sanchez2018x}
{\sc Sanchez, F. and Calvo, J.G. (2018).}
\newblock The role of short-term immigration on disease dynamics: An SIR model with age-structure.
\newblock arXiv:1811.05923
  
\bibitem{sanchez2007}
{\sc Sanchez, F., Wang, X., Castillo-Chavez, C., Gorman, D., and Gruenewald, P. (2007).}
\newblock {\em Drinking as an Epidemic-A Simple Mathematical Model with Recovery and Relapse}.
\newblock {In: Witkiewitz, K.A., Marlatt, G.A., eds. Therapist\'s Guide to Evidence-Based Relapse Prevention}
\newblock Elsevier Inc., pp.~353--368.

\bibitem{shim2006}
{\sc Shim, E., Feng, Z., Martcheva, M., and Castillo-Chavez, C. (2006).}
\newblock An age-structured epidemic model of rotavirus with vaccination.
\newblock {\em J. Math. Biol.} {\bf 53}(4):719--746.

\bibitem{song2006}
{\sc Song, B., Castillo-Garsow, M., Rios-Soto, K., Mejran, M., Henso, L., and Castillo-Chavez, C. (2006).}
\newblock Raves, clubs and ecstasy: the impact of peer pressure.
\newblock {\em Math Biosci Eng.} {\bf 3}(1):249--266.
  
\bibitem{song2013}
{\sc Song, B., Du, W., and Lou, J. (2013).}
\newblock Different types of backward bifurcations due to density-dependent treatments.
\newblock {\em Math Biosci Eng.} {\bf 10}(5-6):1651—1668.

\end{thebibliography}
\end{document}